\theoremstyle{plain}
\newtheorem{theorem}{Theorem}[section]
\newtheorem{lemma}[theorem]{Lemma}
\newtheorem{proposition}[theorem]{Proposition}
\newtheorem{corollary}[theorem]{Corollary}
\theoremstyle{definition}
\newcommand{\N}{\hbox{\ensuremath{\mathbb{N}}}}
\newcommand{\Z}{\hbox{\ensuremath{\mathbb{Z}}}}
\newcommand{\R}{\hbox{\ensuremath{\mathbb{R}}}}
\newcommand{\C}{\hbox{\ensuremath{\mathbb{C}}}}
\newcommand{\rk}{\mathrm{rank\ }}
\newcommand{\sinc}{\text{sinc}}
\newcommand{\Ker}{\text{Ker}}
\newcommand{\beq}{\begin {equation}}
\newcommand{\eeq}{\end{equation}}
\newcommand{\esssup}{{\rm ess\,sup}\ }
\numberwithin{equation}{section}
\begin{document}

%  authors
%%%%%%%%%%%%%%%%%%%%%%%% PREAMBULO

\title[Uncertainty Principles]{Uncertainty Principles in Finitely generated Shift-Invariant Spaces with additional invariance}

%\author[Akram Aldroubi]{Akram~Aldroubi} \address[Akram Aldroubi]{Department of Mathematics\\
%Vanderbilt University\\ 1326 Stevenson Center\\ Nashville, TN 37240}
%\email %[Akram Aldroubi]
%{akram.aldroubi@vanderbilt.edu}

\author[Romain Tessera]{Romain~Tessera}
\address[Romain Tessera]{Department of Mathematics\\
Ecole Normale Superieure de Lyon\\ France}
\email
{romtessera@gmail.com}

\author[Haichao Wang]{Haichao~Wang}
\address[Haichao Wang]{Department of Mathematics\\
University of California\\ Davis, CA 95616}
\email %[Haichao Wang]
{hchwang@ucdavis.edu}

 %\email[Haichao Wang]{haichao.wang@vanderbilt.edu}
%\thanks{This article is mainly based on my research on shift invariant spaces. Thanks to Professor Akram Aldroubi for his insight and patient
%advice, and also for his ceaseless support and encouragement.}

\keywords{Finitely generated shift-invariant spaces; Frame; Additional invariance; Uncertainty principle}

%\thanks{ The research of A.~Aldroubi is supported in part by NSF Grant DMS-0807464.}

\maketitle

\begin{abstract}
%\boldmath
We consider finitely generated shift-invariant spaces (SIS) with additional invariance in $L^2(\R^d)$. We prove that if the generators and their translates form a frame, then they must satisfy some stringent restrictions on their behavior at infinity.  Part of this work (non-trivially) generalizes recent results obtained in the special case of a principal shift-invariant spaces in $L^2(\R)$ whose generator and its translates form a Riesz basis.
\end{abstract}

%%%%%%%%%%%%%%%  INTRODUCTION    %%%%%%%%%%%%%%%%%%%

\section{Introduction}

Finitely generated shift-invariant spaces have been widely used in  approximation theory,  numerical analysis, sampling theory and
wavelet theory ( see e.g., \cite{ACHM07,AG01, AST05, Bow00,  CS07, DDR94, RS95, UB00} and the references therein). 
Shift-invariant spaces with additional invariance have been studied in the context of
wavelet analysis and sampling theory \cite{AKTW11, CS03,HL09, Web00}, and have been given a complete algebraic description  in \cite {ACHKM10} for $L^2(\R)$ and in \cite {ACP10} for $L^2(\R^d)$. As a tool for showing our main results, we will prove a slightly different but useful characterization. 
 
It is well-known that the Paley-Wiener space $PW$ is translation-invariant. Moreover, Shannon's sampling theorem easily implies that $PW$ is principal, i.e. generated by the single function $\sinc$. It turns out that the fact that $\sinc$ is non-integrable is not a coincidence. Actually, for a principal shift-invariant space in $L^2(\R)$ which is translation-invariant, any  frame generator is non-integrable (see for instance \cite{ASW11}). This observation holds in any dimension. Indeed, the Fourier transform $\widehat{\phi}$ of a frame generator has to satisfy for a.e. $\omega\in \R^d$
$$C^{-1}1_A(\omega)\leq |\widehat{\phi}(\omega)|\leq C 1_{A}(\omega) $$ 
for some $C\geq 1$ and some finite measure subset $A$. In particular, $\widehat{\phi}$ is not continuous. Such a condition also prevents $\widehat{\phi}$ from being in the Sobolev space\footnote{i.e. $\int|\phi(x)|^2(1+|x|)dx=\infty$. } $H^{\frac 12}(\R^d)$ (see \cite{KW99}), whereas it belongs to $H^{\frac 12-\epsilon}(\R^d)$ for every $\epsilon>0$ when $A$ is a Euclidean  ball of positive radius.

Our first result is a straightforward  generalisation of this fact to shift-invariant spaces generated by several functions.
\begin{theorem}\label{notranslation}
Let $\Lambda$ be a lattice in $\R^d$.
If a finitely generated $\Lambda$-invariant space of $L^2(\R^d)$ is translation-invariant, then at least one of its frame generators has a non continuous Fourier transform, and in particular is not in $L^1$. Moreover this generator satisfies $\int|\phi(x)|^2(1+|x|)dx=\infty$.  
\end{theorem}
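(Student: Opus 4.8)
The plan is to transport the whole problem to the Fourier side, where translation invariance and finite generation become transparent, and then read off the two conclusions from a single pointwise frame inequality.

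\textbf{Step 1: reduction to a band region $E$ of finite measure.} First I would use the fact that a closed translation‑invariant subspace of $L^2(\R^d)$ is, by Wiener's characterization, of the form $V=\{f\in L^2(\R^d):\widehat f=0\text{ a.e.\ off }E\}$ for some measurable $E\subseteq\R^d$. Next I fiberize $V$ as a $\Lambda$-invariant space: over a fundamental domain $\Omega$ of the dual lattice $\Lambda^*$, the range function is the \emph{coordinate} subspace $J(\omega)=\{c\in\ell^2(\Lambda^*):c_k=0\text{ if }\omega+k\notin E\}$, whose dimension is the number of sheets of $E$ above $\omega$. Finite generation by $N$ functions forces $\dim J(\omega)\le N$ a.e., and integrating gives $|E|=\int_\Omega \#\{k\in\Lambda^*:\omega+k\in E\}\,d\omega\le N|\Omega|<\infty$; since $V\neq\{0\}$ we also have $|E|>0$, and hence $|E^c|=\infty$.

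\textbf{Step 2: the pointwise two-sided bound.} The key computation is that the $\Lambda$-translates of $\phi_1,\dots,\phi_N$ form a frame for $V$ exactly when their fibers $\big(\widehat{\phi_i}(\omega+k)\big)_{k}$ form a frame for $J(\omega)$ with uniform bounds $A,B$ for a.e.\ $\omega$. Testing the frame inequality on the standard basis vectors $e_k$ of the coordinate subspace $J(\omega)$ yields, for a.e.\ $\xi\in E$, the scalar inequality $A\le \sum_{i}|\widehat{\phi_i}(\xi)|^2\le B$, while $\widehat{\phi_i}(\xi)=0$ for $\xi\notin E$. Writing $\Phi:=\sum_i|\widehat{\phi_i}|^2$, this is precisely the vector analogue of the condition quoted in the introduction, namely $A\,\mathbf 1_E\le\Phi\le B\,\mathbf 1_E$ a.e. Everything now reduces to exploiting that $\Phi$ behaves like $A\,\mathbf 1_E$ across $\partial E$.

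\textbf{Step 3: the two failure statements.} For discontinuity I argue by contradiction: if every $\widehat{\phi_i}$ were continuous then $\Phi$ is continuous, $U:=\{\Phi>0\}$ is open and equals $E$ up to a null set with $0<|U|<\infty$, so $U$ is a nonempty proper open subset and has a boundary point $\xi_0$. Since $\xi_0\notin U$, continuity gives $\Phi(\xi_0)=0$; but $\{\Phi\ge A\}$ is closed and has full measure in the open set $U$, hence is dense in $U$ and therefore contains $\overline U\ni\xi_0$, forcing $\Phi(\xi_0)\ge A>0$, a contradiction. Thus some $\widehat{\phi_{i_0}}$ is discontinuous, and Riemann–Lebesgue gives $\phi_{i_0}\notin L^1$. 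For the weighted condition I use the equivalence $\int|\phi(x)|^2(1+|x|)\,dx<\infty\iff \widehat\phi\in H^{1/2}(\R^d)$ (Fourier inversion turns the weight $1+|x|$ into the $H^{1/2}$ weight), together with the Gagliardo seminorm. Summing over $i$ and restricting the double integral to $\xi\in E,\ \eta\in E^c$ (where $\widehat{\phi_i}(\eta)=0$) gives
\[
\sum_i\|\widehat{\phi_i}\|_{\dot H^{1/2}}^2\;\gtrsim\;\int_E\int_{E^c}\frac{\sum_i|\widehat{\phi_i}(\xi)|^2}{|\xi-\eta|^{d+1}}\,d\eta\,d\xi\;\ge\;A\int_E\int_{E^c}\frac{d\eta\,d\xi}{|\xi-\eta|^{d+1}}\;=\;\tfrac{A}{2}\,\|\mathbf 1_E\|_{\dot H^{1/2}}^2 .
\]
Since $0<|E|<\infty$, the characteristic function $\mathbf 1_E$ is not in $H^{1/2}(\R^d)$ (this is the scalar fact underlying \cite{KW99}), so the right-hand side is infinite and some $\widehat{\phi_j}\notin H^{1/2}$, i.e.\ $\int|\phi_j(x)|^2(1+|x|)\,dx=\infty$.

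\textbf{Main obstacle.} The genuinely delicate points are the harmonic-analytic input $\mathbf 1_E\notin H^{1/2}(\R^d)$ for \emph{every} nontrivial set of finite measure (equivalently, the divergence of the fractional-perimeter integral above), and the bookkeeping needed to attribute \emph{both} failures to a single generator $\phi$, as the statement asserts. I expect the cleanest way to secure the latter is to localize near an essential boundary point of $E$: the index $i$ whose $|\widehat{\phi_i}|^2$ carries a definite fraction of the lower bound $A$ on a positive-measure piece abutting $E^c$ is simultaneously the one producing the boundary discontinuity of Step 3 and the one driving the local Gagliardo divergence, and it is this matching that requires the most care.
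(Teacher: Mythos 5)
Your proof is correct and follows essentially the same route as the paper: reduce to the pointwise inequality $A\,\mathbf 1_E\le\sum_i|\widehat{\phi_i}|^2\le B\,\mathbf 1_E$ with $0<|E|<\infty$ (this is exactly the paper's statement that $\mathrm{Tr}(A(\omega))$ is comparable to an indicator, obtained there via Theorem \ref{thm:frame} and Corollary \ref{cor:frame}), then invoke discontinuity of indicators and the Kolountzakis--Wolff fact that $\mathbf 1_E\notin H^{1/2}$; your Gagliardo-seminorm computation just makes explicit the passage from the sum to an individual generator, which the paper leaves implicit. The one point you flag as delicate --- forcing the \emph{same} index $i_0$ to witness both the discontinuity and the failure of $\widehat{\phi}_{i_0}\in H^{1/2}$, as the wording ``this generator'' suggests --- is not resolved by the paper's own proof either (it only establishes both failures for the trace), so your argument is at least as complete as the one in the paper.
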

 The slow spatial-decay of the generators of shift-invariant spaces that are also translation-invariant is a disadvantage for the numerical implementation of some analysis and processing algorithms. This is a motivation for considering instead shift-invariant spaces that are only $\frac 1n\Z$-invariant and hoping the generators will have better time-frequency localization. Indeed, it was shown in \cite{ASW11} that for every $n$, one can construct a principal shift-invariant space with an orthonormal generator which is in $L^1(\R)$ although the space is $\frac 1n\Z$-invariant.
However, there are still obstructions if we require more regularity on the Fourier transform of the generators. Asking for fractional differentiability yields the following Balian-Low type obstructions (see \cite{BCGP03} and the reference therein). Compare \cite[Theorem 1.2]{ASW11}.
\begin{theorem}\label{epsilonthm}
Let $\Lambda<\Gamma$ be two lattices\footnote{The notation $\Lambda<\Gamma$ is standard to denote subgroups.} in $\R^d$. 
Suppose $\phi_i\in L^2(\R^d)$ are such that $\{\phi_i(\cdot+\lambda)| \lambda\in \Lambda, i=1, \dots, r\}$ forms a frame for the closed subspace $V^{\Lambda}(\Phi)$ spanned by these functions. Let $\rho$ ($\leq r$) be the minimal number of generators of  $V^{\Lambda}(\Phi)$. Assume that $[\Gamma:\Lambda]$ is not a divisor of $\rho$, and suppose that  $V^{\Lambda}(\Phi)$ is $\Gamma$-invariant. Then there exists $i_0\in\{1, \dots, r\}$ such that 
$$\int_{\R^d} |\phi_{i_0}(x)|^2 |x|^{d+\epsilon} dx=+\infty$$
for all $\epsilon>0$. I.e.  $\widehat{\phi}_{i_0}$ is not in $H^{\frac d2+\epsilon}(\R^d)$. 
\end{theorem}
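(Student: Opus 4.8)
The plan is to argue by contradiction, assuming that every generator has good localization, and to show that this forces $[\Gamma:\Lambda]$ to divide $\rho$. Suppose the conclusion fails; then for each $i$ there is $\epsilon_i>0$ with $\widehat\phi_i\in H^{d/2+\epsilon_i}(\R^d)$, and setting $\epsilon_0=\min_i\epsilon_i>0$ we get $\widehat\phi_i\in H^{d/2+\epsilon_0}$ for all $i$ simultaneously. The first (technical) input I would invoke is the standard fact that Sobolev regularity above the critical index $d/2$ makes periodizations continuous: if $g\in H^{s}(\R^d)$ with $s>d/2$ then for any lattice $L$ the bracket series $\sum_{\ell\in L}g(\cdot+\ell)\overline{h(\cdot+\ell)}$ converges uniformly to a continuous $L$-periodic function (cf. \cite{KW99}). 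Consequently both the $\Lambda$-Gramian $G(\omega)=\big[\sum_{k\in\Lambda^*}\widehat\phi_i(\omega+k)\overline{\widehat\phi_{i'}(\omega+k)}\big]_{i,i'}$ and the analogous $\Gamma$-Gramian are continuous, Hermitian, positive--semidefinite matrix-valued functions.

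The heart of the argument is a constant-rank statement. Fiberizing over $\Lambda$, the frame hypothesis means that for a.e. $\omega$ the vectors $\{\mathcal{T}\phi_i(\omega)\}_{i=1}^r$ form a frame for the fiber $J(\omega)$ with bounds $0<A\le B$ independent of $\omega$; equivalently the nonzero eigenvalues of $G(\omega)$ lie in $[A,B]$, so that every eigenvalue of $G(\omega)$ lies in the disconnected set $\{0\}\cup[A,B]$. Since $G$ is continuous this property holds for every $\omega$, and the ordered eigenvalues $\lambda_1(\omega)\ge\cdots\ge\lambda_r(\omega)$ depend continuously on $\omega$. As $G$ is $\Lambda^*$-periodic it descends to the torus $\R^d/\Lambda^*$, which is connected; a continuous function into $\{0\}\cup[A,B]$ on a connected space stays in a single component, so each $\lambda_k$ is either identically $0$ or bounded below by $A$ everywhere. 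Hence the dimension function $\dim_V^\Lambda(\omega)=\rk G(\omega)$ is constant, and since the minimal number of generators equals the essential supremum of the dimension function this constant value is exactly $\rho$. I want to stress that the frame lower bound is essential here: it is precisely the spectral gap $(0,A)$ that a Bessel hypothesis alone would not provide, and without it the rank could drop.

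Next I would run the same argument over $\Gamma$. First one checks that the enlarged system $\{\phi_i(\cdot+\gamma):\gamma\in\Gamma,\ i=1,\dots,r\}$ is again a frame for $V^{\Lambda}(\Phi)$: its lower bound is inherited because it contains the original $\Lambda$-frame, while for the upper bound one writes $\Gamma=\bigsqcup_{t=1}^{n}(\Lambda+g_t)$ with $n=[\Gamma:\Lambda]$, notes that each $\phi_i(\cdot+g_t)$ lies in $V^{\Lambda}(\Phi)$ by $\Gamma$-invariance, and observes that each of the $n$ families $\{\phi_i(\cdot+g_t+\lambda):\lambda\in\Lambda\}$ is Bessel with the same bound $B$ as the original system (by translation invariance of the norm), so their union is Bessel. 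The constant-rank argument of the previous paragraph, applied now on the connected torus $\R^d/\Gamma^*$, then shows that the $\Gamma$-dimension function $\dim_V^\Gamma$ is constant, equal to the minimal number $\rho_\Gamma$ of $\Gamma$-generators.

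Finally I would combine the two constancy statements through the additional-invariance structure. Writing $\Lambda^*=\bigsqcup_{j=1}^{n}(\Gamma^*+d_j)$, the decomposition $\ell^2(\Lambda^*)=\bigoplus_{j=1}^{n}\ell^2(\Gamma^*+d_j)$ identifies the $\Lambda$-fiber $\mathcal{T}\phi(\omega)$ with the tuple of $\Gamma$-fibers $(\mathcal{T}^\Gamma\phi(\omega+d_j))_{j}$. The characterization of $\Gamma$-invariance established above says exactly that $J(\omega)$ splits along this block decomposition, whence $J(\omega)=\bigoplus_{j=1}^n J^\Gamma(\omega+d_j)$ and therefore $\dim_V^\Lambda(\omega)=\sum_{j=1}^n\dim_V^\Gamma(\omega+d_j)$ for a.e. $\omega$. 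Plugging in the two constants gives $\rho=\sum_{j=1}^n\rho_\Gamma=n\,\rho_\Gamma$, so $[\Gamma:\Lambda]=n$ divides $\rho$, contradicting the hypothesis. This contradiction shows that some $\widehat\phi_{i_0}$ cannot lie in $H^{d/2+\epsilon}$ for every $\epsilon>0$, which is the assertion. The main obstacle in this scheme is the first step, namely guaranteeing genuine continuity (not merely a.e. finiteness) of the bracket products from fractional Sobolev regularity so that the eigenvalue-continuity argument applies; the remaining steps are then essentially linear algebra together with the connectedness of the frequency torus.
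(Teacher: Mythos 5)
Your argument is correct in substance, but it takes a genuinely different route from the paper's proof, so let me compare the two. The paper extracts from the decay hypothesis only that each $\phi_i\in L^1$, hence that each $\widehat{\phi}_i$ (not the Gramian) is continuous; this yields lower semicontinuity of $\rk A$ and $\rk G^{\Lambda}$ and, via the rank identity (\ref{eq:rank}), that $E=\{\omega:\rk G^{\Lambda}(\omega)<\rho\}$ is a non-empty closed set (Proposition \ref{prop:det}). The weight $|x|^{d+\epsilon}$ is then spent on a quantitative local estimate (Lemma \ref{epsilonlemma}): near a boundary point $\omega_0$ of $E$, the periodizations of the functions $f_a$ built from kernel directions of $G^{\Lambda}(\omega_0)$ have small average over small balls, which contradicts the frame lower bound on the portion of those balls lying in $E^c$. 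You instead spend the weight on upgrading continuity of the $\widehat{\phi}_i$ to continuity of the Gramian fields $G^{\Lambda}$ and $A$ themselves, and then run the spectral-gap/connectedness argument to force $\rk A$ constant and $\rho=[\Gamma:\Lambda]\cdot\rk A$ --- which is precisely how the paper proves Theorem \ref{pointwise}, where continuity of $A$ is immediate from the pointwise decay hypothesis. Your route is cleaner and unifies the two theorems, but everything hinges on the bracket-product continuity claim, which you do not prove and for which \cite{KW99} is not the right reference (that paper concerns the $H^{1/2}$ obstruction for indicator-type periodizations). The claim is nevertheless true, and here is the estimate you need: the Fourier coefficient of the $\Lambda^*$-periodization of $\widehat{\phi}_i\overline{\widehat{\phi}_j}$ indexed by $\lambda\in\Lambda$ is, up to normalization and conventions, $\int\phi_j(x)\overline{\phi_i(x+\lambda)}\,dx$, so the sum of their absolute values is at most
$$\Big\|\sum_{\lambda\in\Lambda}|\phi_i(\cdot+\lambda)|\Big\|_{L^2(\R^d/\Lambda)}\Big\|\sum_{\lambda\in\Lambda}|\phi_j(\cdot+\lambda)|\Big\|_{L^2(\R^d/\Lambda)},$$
and each factor is finite because Cauchy--Schwarz gives $\big(\sum_{\lambda}|\phi(x+\lambda)|\big)^2\le\big(\sum_{\lambda}(1+|x+\lambda|)^{-d-\epsilon}\big)\big(\sum_{\lambda}|\phi(x+\lambda)|^2(1+|x+\lambda|)^{d+\epsilon}\big)$, whose integral over a fundamental domain is controlled by $\int|\phi(x)|^2(1+|x|)^{d+\epsilon}\,dx<\infty$. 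Two small corrections: this yields absolute summability of the Fourier coefficients, hence equality a.e. with a continuous periodic function, not uniform convergence of the periodized series (which may fail since $\widehat{\phi}_i$ need have no pointwise decay); this weaker conclusion suffices because the frame inequality (\ref{upperlower}) and the rank identity (\ref{eq:rank}) hold only a.e. anyway and transfer to the continuous representatives. Also, the splitting $\rk G^{\Lambda}(\omega)=\sum_{f\in F}\rk A(\omega+f)$ that you invoke at the end is Theorem \ref{thm:rank} of the paper and must be cited as such; it is not something your argument had already established.
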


One can also ask for a combinaison of regularity, namely continuity and of control of the decay at infinity. In this spirit, we obtain the following result (compare \cite[Theorem 1.3]{ASW11}).  
\begin{theorem}\label{pointwise}
Keep the same assumptions as in the last theorem, and suppose moreover that $\widehat{\phi}_i$ is continuous for every $i=1, \dots, r$. Then there exits $i_0\in\{1, \dots, r\}$ such that $\omega^{\frac d2+\epsilon} \widehat\phi_{i_0}(\omega)\not\in L^\infty(\R^d)$ for any $\epsilon>0$, i.e.,
\begin{equation*}
%\label{frequency.tm.eq1}
\esssup_{\omega\in\mathbb R} |\widehat\phi_{i_0}(\omega)| |\omega|^{\frac d2+\epsilon} =+\infty.
\end{equation*}
\end{theorem}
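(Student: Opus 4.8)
The plan is to argue by contradiction. Suppose the conclusion fails, so that for every $i$ there is $\epsilon_i>0$ with $\esssup_{\omega}|\widehat\phi_i(\omega)||\omega|^{d/2+\epsilon_i}<\infty$. Setting $\epsilon=\min_i\epsilon_i>0$ and using that each $\widehat\phi_i$ is continuous, I get a genuine pointwise bound $|\widehat\phi_i(\omega)|\le C(1+|\omega|)^{-d/2-\epsilon}$ for all $\omega$ and all $i$. The whole point of this bound is that, since $d+2\epsilon>d$, the lattice sum $\sum_{k}(1+|k|)^{-d-2\epsilon}$ converges, so the fiber tails $\sum_{|k|>N}|\widehat\phi_i(\omega+k)|^2$ are uniformly small over a fundamental domain. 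Hence the $\ell^2$-valued fiber maps $\omega\mapsto(\widehat\phi_i(\omega+k))_k$ --- both for the dual lattice $\Lambda^*$ and for $\Gamma^*$ --- are uniform limits of their continuous truncations, so they are continuous, and consequently the Gramian matrices of these fibers are continuous matrix-valued functions on the corresponding frequency tori.

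Next I exploit the frame lower bound. By the standard fiberization of shift-invariant spaces, the $\Lambda$-translates of $\Phi$ form a frame for $V^{\Lambda}(\Phi)$ iff the fiber vectors $\{(\widehat\phi_i(\omega+k))_k\}_{i=1}^r$ form a frame of the fiber space $J(\omega)$ with uniform bounds $A\le B$ for a.e. $\omega$; equivalently the nonzero eigenvalues of the (now continuous) Gramian $G(\omega)$ lie in $[A,B]$ and $\rk G(\omega)=\dim J(\omega)$. The crucial observation is that $A>0$ creates a spectral gap $(0,A)$ that the continuous eigenvalues of $G(\omega)$ cannot cross: each eigenvalue is a continuous function taking values in the disconnected set $\{0\}\cup[A,B]$, hence is locally either identically $0$ or bounded below by $A$. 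Therefore $\rk G(\omega)=\dim J(\omega)$ is locally constant, and since the $\Lambda$-frequency torus is connected, $\dim J(\omega)$ is constant, necessarily equal to its essential supremum $\rho$.

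The final and main step is topological. I invoke the characterization of $\Gamma$-invariance (the one announced in the introduction, equivalently the descriptions of \cite{ACHKM10, ACP10}): writing $K=\Lambda^*/\Gamma^*$, a group of order $n=[\Gamma:\Lambda]$, $\Gamma$-invariance is equivalent to each fiber $J(\omega)$ being invariant under the diagonal action of the characters of $K$, so that $J(\omega)=\bigoplus_{c\in K}\bigl(J(\omega)\cap\ell^2(c)\bigr)$, where $c$ runs over the cosets of $\Gamma^*$ in $\Lambda^*$. Identifying $\ell^2(c)\cong\ell^2(\Gamma^*)$ by a shift, the $c$-component is exactly the $\Gamma$-fiber at $\omega+\kappa_c$, whence $\dim J(\omega)=\sum_{c}d(\omega+\kappa_c)$, where $d$ is the $\Gamma$-dimension function on the torus $\widehat{\R^d}/\Gamma^*$ and the $\kappa_c$ are coset representatives. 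By the first step applied to the $\Gamma^*$-fibers, $d=\rk\widetilde G$ is lower semicontinuous on $\widehat{\R^d}/\Gamma^*$, while the second step gives that its orbit sum $\sum_{c}d(\cdot+\kappa_c)\equiv\rho$ is constant. A short semicontinuity argument then forces $d$ to be upper semicontinuous as well: writing the value of $d$ at one orbit point as $\rho$ minus the sum of its values at the other orbit points and using lower semicontinuity of the latter yields $\limsup d\le d$ at each point. Thus $d$ is a continuous integer-valued function on the connected torus $\widehat{\R^d}/\Gamma^*$, hence constant, say $d\equiv d_0$; but then $\rho=\sum_{c}d_0=n\,d_0$, so $n\mid\rho$, contradicting the hypothesis.

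I expect the main obstacle to be the bookkeeping in this last step: setting up the fiber characterization of $\Gamma$-invariance precisely, identifying the coset decomposition with the finer $\Gamma$-fibers, and checking that everything holds genuinely pointwise rather than merely almost everywhere --- which the continuity hypothesis is exactly what legitimizes --- and then running the semicontinuity argument on the finer torus $\widehat{\R^d}/\Gamma^*$ rather than on $\widehat{\R^d}/\Lambda^*$. A secondary technical point to handle with care is justifying the spectral-gap local constancy of the rank in the infinite-dimensional fiber setting, which is where the decay exponent $d/2+\epsilon$ (and not merely $d/2$) is essential, since it is precisely what makes the fiber Gramians continuous.
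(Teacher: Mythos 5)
Your argument is correct and reaches the same contradiction ($[\Gamma:\Lambda]$ divides $\rho$) by the same overall strategy as the paper: the assumed decay plus continuity makes the fiber Gramians continuous, the frame bound creates a spectral gap that forces ranks to be locally constant, and the rank identity characterizing $\Gamma$-invariance converts constancy into divisibility. The one genuine difference is where the spectral-gap/constant-rank argument is applied. The paper first transfers the frame inequality $s^{-1}G\le G^2\le sG$ from the coarse Gramian $G^{\Lambda}$ to the fine Gramian $A$ (Theorem \ref{thm:frame}, which rests on the eigenvalue comparison of Lemma \ref{lem:matrices} for sums of non-negative matrices with additive ranks), and then applies the constant-rank lemma (Lemma \ref{lem: constantrank}) directly to the continuous family $A(\omega)$; constancy of $\rk A$ together with $\rk G^{\Lambda}(\omega)=\sum_f\rk A(\omega+f)$ then finishes in one line. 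You instead apply the spectral-gap argument only to $G^{\Lambda}$, obtain $\rk G^{\Lambda}\equiv\rho$, and recover constancy of $\rk A$ from the rank identity by semicontinuity: each $\rk A(\cdot+\kappa_c)$ is lower semicontinuous and the orbit sum is the constant $\rho$, so each summand is also upper semicontinuous, hence continuous, hence constant on the connected torus. This is precisely the mechanism of the paper's Proposition \ref{prop:det} (used there in the proof of Theorem \ref{epsilonthm}), so your route trades the transfer lemma for the a.e.-to-everywhere upgrade of the rank identity --- which you correctly flag as the main bookkeeping point and which does go through: subadditivity of rank gives $\sum_f\rk A(\omega+f)\ge\rk G^{\Lambda}(\omega)=\rho$ everywhere, and lower semicontinuity makes the set where the sum exceeds $\rho$ open and null, hence empty. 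Both routes are sound; the paper's is shorter once Theorem \ref{thm:frame} is available, while yours avoids Lemma \ref{lem:matrices} altogether and is more self-contained at the level of this theorem.
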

For $d=1$, the exponent is sharp up to the $\epsilon$ in both theorems but it does not seem to be the case for larger $d$. We actually conjecture that the right exponent should be the same for all dimensions.

Notice that the condition $[\Gamma:\Lambda]\nmid \rho$ in the above theorems is essential. 
Indeed all regularity constraints trivially disappear when $\rho=k[\Gamma:\Lambda]$ for any integer $k>0$. To see why, start with some $\Gamma$-invariant space generated by exactly $k$ orthogonal generators $\phi_1,\ldots, \phi_k$, with --say-- smooth and compactly generated Fourier transforms. Then note that as a $\Lambda$-invariant space,   $V^{\Gamma}(\phi_1,\ldots, \phi_k)$ is generated by the orthogonal generators  $\phi_i(\cdot-f)$ for $f\in F$ and $i=1,\ldots, k$, where $F$ is a section of $\Gamma/\Lambda$ in $\Gamma$.

The previous results state that under additional invariance there always exists at least one (frame) generator whose Fourier transform has poor regularity. One may wonder if at least some generators can be chosen with good properties. We do not know what the optimal proportion of good generators should be. The following proposition gives a lower bound on the number of good generators. Observe that this bound gets worse with the dimension.

\begin{proposition}\label{prop:regular}
For every $d\geq 1$, and every $k\in \N$ there exists an SIS $V(\Phi)$ in $L^2(\R^d)$ generated by an orthonormal basis $\Phi$ consisting of $r=(2k)^d$ functions, $k^d$ of which have  smooth and compactly supported Fourier transforms, and such that $V(\Phi)$ is translation-invariant. Moreover all the generators can be chosen so that their Fourier transforms are in $H^{\frac 12-\epsilon}(\R^d)$ for all $\epsilon>0$.
\end{proposition}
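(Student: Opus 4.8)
The plan is to work entirely on the frequency side and to build the example as a $d$-fold tensor product of a one-dimensional construction. Recall the classical Wiener-type description: the closed subspaces of $L^2(\R^d)$ invariant under all translations are exactly the band-limited spaces $V_E=\{f\in L^2(\R^d):\widehat f=0 \text{ a.e. outside } E\}$ for measurable $E\subseteq\R^d$. I would take $E=[0,2k)^d$, so that $V_E$ is automatically translation-invariant and in particular $\Z^d$-shift-invariant. Since for a.e.\ $\xi\in[0,1)^d$ there are exactly $(2k)^d$ integer points $j$ with $\xi+j\in E$, the fiber dimension of $V_E$ as a $\Z^d$-SIS equals $r=(2k)^d$, which is precisely the number of elements an orthonormal set of shift-generators must contain. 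It then remains to produce such an orthonormal basis with the stated regularity.

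First I would treat $d=1$. Partition $[0,2k)$ into the $k$ pairs of unit intervals $P_l=[2l,2l+2)$, $l=0,\dots,k-1$, and fix a smooth bell $c\in C^\infty(\R)$ supported in $(0,2)$ with $c(t)^2+c(t+1)^2=1$ for $t\in(0,1)$ (the standard window used for smooth local Fourier/Meyer-type bases). On each pair I define two generators by prescribing their frequency fibers: set $\widehat{\alpha_l}(2l+t)=c(t)$ on $(0,2)$, and let $\widehat{\beta_l}$ be the orthogonal completion, namely $\widehat{\beta_l}(2l+t)=-c(t+1)$ for $t\in(0,1)$ and $\widehat{\beta_l}(2l+t)=c(t-1)$ for $t\in(1,2)$. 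For each $\xi\in(0,1)$ the $2\times 2$ matrix whose rows are the fibers $(\widehat{\alpha_l}(\xi+2l),\widehat{\alpha_l}(\xi+2l+1))$ and $(\widehat{\beta_l}(\xi+2l),\widehat{\beta_l}(\xi+2l+1))$ is orthogonal, so $\{\alpha_l,\beta_l:l\}$ is an orthonormal basis of $V_{[0,2k)}$ under $\Z$-shifts. By construction $\widehat{\alpha_l}\in C^\infty_c$, whereas $\widehat{\beta_l}$ is smooth in the interior of its support but jumps at the endpoints $2l,2l+2$; hence $\widehat{\beta_l}$ is not continuous yet lies in $H^{1/2-\epsilon}(\R)$ for every $\epsilon>0$, since a bounded compactly supported function with jump discontinuities has exactly this regularity.

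Next I would tensorize. Writing $\{\gamma_1,\dots,\gamma_{2k}\}=\{\alpha_0,\beta_0,\dots,\alpha_{k-1},\beta_{k-1}\}$, the functions $\Phi_{\mathbf i}=\gamma_{i_1}\otimes\cdots\otimes\gamma_{i_d}$, $\mathbf i\in\{1,\dots,2k\}^d$, form an orthonormal basis of $V_E=V_{[0,2k)}^{\otimes d}$ under $\Z^d$-shifts, yielding $r=(2k)^d$ generators; orthonormality across shifts and completeness both follow coordinatewise from the one-dimensional case. The Fourier transform $\widehat{\Phi_{\mathbf i}}=\prod_s\widehat{\gamma_{i_s}}$ is smooth and compactly supported precisely when every factor is one of the $\alpha$'s, and there are exactly $k^d$ such multi-indices. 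Finally, each generator lies in $H^{1/2-\epsilon}(\R^d)$: using $(1+|x|^2)^{s}\le\prod_s(1+x_s^2)^{s}$ for $s=\tfrac12-\epsilon>0$ together with Plancherel, $\int|\Phi_{\mathbf i}(x)|^2(1+|x|^2)^s\,dx\le\prod_s\int|\gamma_{i_s}(x_s)|^2(1+x_s^2)^s\,dx_s<\infty$, so the product of one-dimensional $H^{1/2-\epsilon}$ factors is again in $H^{1/2-\epsilon}$.

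The only genuinely delicate point is the one-dimensional step. By Theorem~\ref{notranslation} no single shift-generator of a translation-invariant SIS can have continuous Fourier transform, so smoothness cannot be obtained by spreading one generator over a single frequency cube; the role of the pairing is exactly to let the smooth partition-of-unity window $c$ supply one smooth generator per pair while forcing its orthogonal complement $\widehat{\beta_l}$ to carry the unavoidable jumps. Everything else—the fiberwise unitarity, the tensor-product bookkeeping, and the Sobolev estimate—is routine once the bell $c$ is in hand.
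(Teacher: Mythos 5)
Your construction is correct and is, at its core, the same one the paper uses: pair up unit frequency intervals in $[0,2k)$, put a smooth partition-of-unity bell on each pair as one generator and its fiberwise orthogonal rotation (which necessarily jumps at the endpoints of the pair) as the other, then tensorize to dimension $d$; the counts ($k^d$ smooth generators out of $(2k)^d$) and the $H^{1/2-\epsilon}$ bookkeeping match the paper's. The one place you diverge is in how translation-invariance is certified: the paper constructs $V(\Phi)$ and then checks the rank identity $\rk G^{\Lambda}(\omega)=\sum_f\rk A(\omega+f)$ of Corollary \ref{cor:rank}, whereas you start from the Wiener-type description of translation-invariant spaces as band-limited spaces $V_E$ and show your generators' fibers form an orthonormal basis of each $(2k)^d$-dimensional fiber of $V_{[0,2k)^d}$, so that $V(\Phi)=V_E$ is translation-invariant by fiat. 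Your route is self-contained and arguably more elementary (it bypasses the paper's Theorem \ref{thm:rank} machinery entirely), at the cost of needing the completeness step --- that the shifts really exhaust $V_E$ --- which your fiberwise unitarity argument does supply; the paper's route fits the example into the general rank criterion it has already developed, which is why it is phrased that way there. Both verifications are valid, and the rest of your argument (the jump analysis for the $\beta$-type generators and the product Sobolev estimate $(1+|x|^2)^{s}\le\prod_j(1+x_j^2)^{s}$) is sound.
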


We also have the following result, 

\begin{proposition}\label{prop:pointwise}
For $d\geq 1$, let $\Gamma$($>\Z^d$) be a lattice of $\R^d$ and $r\ge1$. Then there exists an SIS $V(\Phi)$ in $L^2(\R^d)$ generated by $r$ orthonormal generators $\phi_i$'s, all of which are in $L^1$ (hence they have continuous Fourier transforms) and satisfy $\omega^{\frac 12} \widehat\phi_i(\omega)\in L^\infty(\R^d)$. Moreover, $V(\Phi)$ is $\Gamma$-invariant. 
\end{proposition}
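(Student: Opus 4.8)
The plan is to prove the statement in three stages: reduce to a rational lattice, reduce to one dimension by tensoring, and then carry out a one-dimensional construction on the Fourier side.

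First I would reduce to $\Gamma=\frac1n\Z^d$. Since $\Gamma/\Z^d$ is a finite abelian group, its exponent $n$ satisfies $n\gamma\in\Z^d$ for every $\gamma\in\Gamma$, so $\Z^d\subseteq\Gamma\subseteq\frac1n\Z^d$; as invariance under the larger lattice $\frac1n\Z^d$ implies invariance under $\Gamma$, it suffices to build a $\frac1n\Z^d$-invariant example. Next I would reduce to $d=1$ by tensoring: if $W_1,\dots,W_d\subseteq L^2(\R)$ are $\frac1n\Z$-invariant SIS, then $W_1\otimes\cdots\otimes W_d\subseteq L^2(\R^d)$ is $\frac1n\Z^d$-invariant, and tensor products of orthonormal generators are orthonormal generators of the tensor space. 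Taking $W_1$ to have $r$ orthonormal generators $\psi_1,\dots,\psi_r$ and $W_2,\dots,W_d$ principal with generators $\chi_2,\dots,\chi_d$, one obtains exactly $r$ generators $\psi_i\otimes\chi_2\otimes\cdots\otimes\chi_d$ in $\R^d$. Both properties survive tensoring: $L^1$ because $\|f\otimes g\|_1=\|f\|_1\|g\|_1$, and the envelope bound via $|\omega|^{1/2}\le\sum_j|\omega_j|^{1/2}$ distributed over the product, using that each continuous one-dimensional factor is bounded. Everything thus reduces to the one-dimensional claim: for every $n\ge2$ and $r\ge1$ there is a $\frac1n\Z$-invariant $\Z$-SIS in $L^2(\R)$ with exactly $r$ orthonormal generators $\psi_i\in L^1(\R)$ such that $|\xi|^{1/2}\widehat{\psi_i}\in L^\infty(\R)$.

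For the one-dimensional claim I would work with fibers. The $\frac1n\Z$-invariance of a $\Z$-SIS $V$ is equivalent to $\widehat V$ being invariant under multiplication by all $n$-periodic bounded functions, which by the range-function theory means $\widehat V$ is described, over $\xi\in[0,n)$ through the fibers $(\widehat f(\xi+nl))_{l\in\Z}\in\ell^2(\Z)$, by a measurable field $J$ of closed subspaces; its $\Z$-dimension at $t\in[0,1)$ equals $\sum_{j=0}^{n-1}\dim J(t+j)$, and $V$ has exactly $r$ orthonormal generators iff this equals $r$. I would make the simplest choice, putting $J(\xi)$ equal to an $r$-dimensional subspace of $\ell^2(\Z)$ for $\xi\in[0,1)$ and $\{0\}$ for $\xi\in[1,n)$; equivalently, each $\widehat{\psi_i}$ is supported on $\bigcup_l[nl,nl+1)$, a single active residue slot. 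Invariance is then automatic (any two integer-separated points of this set differ by a multiple of $n$), the $\Z$-dimension is $r$, and orthonormality becomes the requirement that for a.e. $t$ the vectors $p^{(i)}(t):=(\widehat{\psi_i}(t+nl))_l\in\ell^2(\Z)$ form an orthonormal system, together with the single-slot periodization $\sum_l|\widehat{\psi_i}(t+nl)|^2=1$.

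The heart of the matter, and the main obstacle, lies in choosing the profiles $p^{(i)}(t)$. Continuity of $\widehat{\psi_i}$ forces it to vanish at the slot endpoints $nl,nl+1$, and reconciling this with the periodization identity as $t\to0^+,1^-$ requires the unit mass of each $p^{(i)}(t)$ to escape to infinity in the index $l$ at the seam; this escape is exactly what produces, and must be controlled to produce, the borderline decay $|\widehat{\psi_i}(\xi)|\le C|\xi|^{-1/2}$. Concretely I would build each $p^{(i)}(t)$ from overlapping smooth bumps whose location and width in $l$ scale like $1/t$ near $t=0$ and like $1/(1-t)$ near $t=1$, with height scaling like $\sqrt t$, so that roughly $1/t$ bumps of squared height $\sim t$ share the unit mass at frequency $\xi\sim n/t$, giving $|\widehat{\psi_i}(\xi)|\sim|\xi|^{-1/2}$; taking the $r$ systems with disjoint index-supports makes them orthonormal, and sufficient smoothness and control of total variation of $\widehat{\psi_i}$ yields $\psi_i\in L^1$. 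For $r=1$ this is the principal construction of \cite{ASW11}, and the difficulty is the simultaneous fulfilment at the seam of continuity of $\widehat{\psi_i}$ on $\R$, the exact periodization (hence orthonormality), integrability $\psi_i\in L^1$, and the sharp envelope $|\xi|^{1/2}\widehat{\psi_i}\in L^\infty$. That no better envelope is possible when $n\nmid r$ is precisely Theorem \ref{pointwise}, so the construction is necessarily this borderline one rather than a smooth one. Reassembling the tensor products of the first step then completes the proof, the resulting space being $\frac1n\Z^d$-invariant and hence $\Gamma$-invariant.
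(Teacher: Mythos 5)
Your proposal follows essentially the same route as the paper: reduce to dimension one by tensoring, invoke the construction of \cite{ASW11} for a single generator with the borderline $|\xi|^{-1/2}$ envelope, and obtain $r$ orthonormal generators by giving the profiles disjoint index-supports in the Fourier domain so that the periodizations decouple and invariance is checked slotwise. The one genuine addition is your preliminary reduction of a general lattice $\Gamma>\Z^d$ to $\frac{1}{n}\Z^d$ via the exponent of $\Gamma/\Z^d$, which cleanly justifies the product-lattice form that the paper simply assumes at the start of its proof.
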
 
To summarize, while Proposition \ref{prop:regular} states that it is possible to construct translation-invariant SIS with a portion of the generators having smooth and compactly supported Fourier transforms, Proposition \ref{prop:pointwise} shows that we can construct $\Gamma$-invariant SIS with all its generators having certain pointwise decay in Fourier domain.

\subsection*{Organization}
In the following section, we state and prove a convenient characterization (similar to the one given in \cite{ACHKM10}) of SIS with additional invariance.  

In Section \ref{Section:Frame}, we refine this characterization under the assumption that the generators and their translates form a frame.

In  Section \ref{section:continuous}, we prove a useful property of the Gramian under the sole assumption that the generators have continuous Fourier transform.

Sections \ref{section:mainresults} and \ref{section:mainpropositions} are dedicated to the proof of the results stated in the introduction.

\section{Finitely generated shift-invariant spaces with additional invariance}

Given a closed subgroup $\Lambda$ of $\R^d$, we define its Fourier transform $\Lambda^*$ as the closed subgroup of $\R^d$ defined by $$\Lambda^*=\{x\in \R^d, e^{2\pi i\langle \lambda ,x\rangle}=1 \; \forall \lambda\in \Lambda\}.$$
Note that the map $\Lambda\to \Lambda^*$ is an involution. The Fourier transform reverses the inclusions, namely if $\Lambda<\Gamma$ then $\Gamma^*< \Lambda^*$. 
Observe for instance that for $d=1$, the Fourier transform of $n\Z$ is $\frac{1}{n}\Z$, or that for $d=2$, the Fourier transform of $\R\times \{0\}$ is $\{0\}\times \R$.

In this paper, we consider the following general setting: $\Lambda <\Gamma$ are two closed subgroups of $\R^n$.
Let $\phi_1,\ldots, \phi_r\in L^2(\R^d)$, and denote by $\Phi$ the column vector whose components are the $\phi_i$'s. We will denote by $V^{\Lambda}(\Phi)$ the smallest closed $\Lambda$-invariant subspace containing the $\phi_i$'s. 
It is common to call $V^{\Lambda}(\Phi)$ {\it shift-invariant} when $\Lambda=\Z^d$, and {\it translation-invariant} when $\Lambda=\R^d$. To allege notation, we will omit the subscript $\Z^d$, when $\Lambda=\Z^d$.

We will be interested in the situation where $V^{\Lambda}(\Phi)$ is in addition $\Gamma$-invariant. As we will see later, this prevents frame generators from having nice decay at infinity. 
We will start by providing a short and self-contained proof of a result essentially due to \cite{ACHKM10, ACP10}. Observe that this problem is only non-trivial when the quotient $\R^d/\Lambda$ is compact since  otherwise, the only finitely generated $\Lambda$-invariant space is $\{0\}$.  This is equivalent to the fact that $\Lambda^*$ (hence $\Gamma^*$) is discrete.

In this paper we will mainly focus on the cases  when $\Lambda$ is a lattice and when $\Gamma$ is either a (larger) lattice or all of $\R^d$.

Before stating this result, let us introduce some notation. The Gramian associated to $\Phi$ and $\Lambda$ is a measurable field of $r\times r$ matrices  whose general coefficient is defined for a.e. $\omega\in \R$ by
$$G^{\Lambda}_{i,j}(\omega)= \sum_{l \in \Lambda^*} \widehat{\phi_i}(\omega+l)\overline{\widehat{\phi_j}}(\omega+l).$$
For short, $$G^{\Lambda}(\omega)= \sum_{l\in \Lambda^*}\widehat{\Phi}(\omega+l)\widehat{\Phi}^*(\omega+l).$$
For a.e. $\omega\in \R$, let
$A(\omega)$ to be the $r\times r$ matrix defined by 
$$A(\omega)=  \sum_{g\in \Gamma^*}\widehat{\Phi}(\omega+g)\widehat{\Phi}^*(\omega+g).$$

Now, let $F$ be a subset of $\Lambda^*$ consisting of representatives of the quotient $\Lambda^*/ \Gamma^*$. For instance, for $d=1$, $\Lambda=\Z$ and $\Gamma=\frac{1}{n}\Z$, one has $\Lambda^*=\Z$ and  $\Gamma^*=n\Z$, and for $F$, one can take $\{0,\ldots n-1\}$.

We have 
\begin{equation}\label{eq:GA}
G^{\Lambda}(\omega)=\sum_{f\in F}A(\omega+f).
\end{equation}
Now let us state the main result of this section. Although it could easily be deduced from the main results in \cite{ACHKM10, ACP10}, we chose to write down a short self-contained proof.

\begin{theorem}\label{thm:rank}
Let $\Lambda<\Gamma$ be closed cocompact subgroups of $\R^d$.
The space $V^{\Lambda}(\Phi)$ is $\Gamma$-invariant if and only if the following equality holds for a.e. $\omega\in \R^d$.
\begin{equation}\label{eq:rank}
\mathrm{rank\ } G^{\Lambda}(\omega)=\sum_{f\in F} \mathrm{rank\ }A(\omega+f).
\end{equation}
\end{theorem}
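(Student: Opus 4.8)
The plan is to pass to the Fourier/fiber side and use the classical range-function description of finitely generated shift-invariant spaces, reducing $\Gamma$-invariance to a fiberwise invariance that is then governed by the characters of $\Gamma$. Throughout, equality \eqref{eq:rank} will be recognized as the dimension count attached to a direct-sum splitting of the fibers.

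First I would set up the fiberization. Since $\Lambda$ is cocompact its dual $\Lambda^*$ is discrete, so the map $g\mapsto (g(\omega+l))_{l\in\Lambda^*}$ is an isometry of $L^2(\R^d)$ onto the direct integral over $\R^d/\Lambda^*$ with fibers $\ell^2(\Lambda^*)$. By the classical structure theory \cite{Bow00}, $\mathcal F V^{\Lambda}(\Phi)$ corresponds to the measurable range function $J(\omega)=\sspan\{v_i(\omega): 1\le i\le r\}$, where $v_i(\omega)=(\widehat{\phi_i}(\omega+l))_{l\in\Lambda^*}$, and $G^{\Lambda}(\omega)$ is exactly the Gram matrix of $v_1(\omega),\dots,v_r(\omega)$, whence $\rk G^{\Lambda}(\omega)=\dim J(\omega)$. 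I then split $\ell^2(\Lambda^*)=\bigoplus_{f\in F}\mathcal H_f$ with $\mathcal H_f=\ell^2(f+\Gamma^*)$ and let $P_f$ be the corresponding block projection. The $f$-block of $v_i(\omega)$ is $(\widehat{\phi_i}(\omega+f+g))_{g\in\Gamma^*}$, so $A(\omega+f)$ is the Gram matrix of these, giving $\rk A(\omega+f)=\dim P_f J(\omega)=:\dim W_f$. Because $v=\sum_f P_f v$ with $P_f v\in W_f$, one always has $J(\omega)\subseteq\bigoplus_f W_f$, hence $\rk G^{\Lambda}(\omega)=\dim J(\omega)\le\sum_f\dim W_f=\sum_f\rk A(\omega+f)$, with equality if and only if $J(\omega)=\bigoplus_f W_f$. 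Thus \eqref{eq:rank} is equivalent to the fiberwise splitting $J(\omega)=\bigoplus_{f\in F}W_f$ for a.e.\ $\omega$.

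The second step translates $\Gamma$-invariance into a fiber statement. Translation by $\gamma$ becomes multiplication by $e_\gamma(\omega)=e^{2\pi i\langle\gamma,\omega\rangle}$, a decomposable operator acting on the fiber over $\omega$ as $D_\gamma(\omega)=\mathrm{diag}(e_\gamma(\omega+l))_{l\in\Lambda^*}$. Fixing a countable dense $S\subseteq\Gamma$, the $L^2$-continuity of $\gamma\mapsto e_\gamma g$ together with closedness of $\mathcal F V^{\Lambda}(\Phi)$ shows $\Gamma$-invariance is equivalent to invariance under $\{M_{e_\gamma}:\gamma\in S\}$, and by decomposability this is equivalent to: for a.e.\ $\omega$, $J(\omega)$ is invariant under $D_\gamma(\omega)$ for every $\gamma\in S$. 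The key computation is that for $l=f+g$ with $g\in\Gamma^*$ one has $e_\gamma(\omega+l)=e^{2\pi i\langle\gamma,\omega\rangle}e^{2\pi i\langle\gamma,f\rangle}$, so up to the global scalar $e^{2\pi i\langle\gamma,\omega\rangle}$ the operator $D_\gamma(\omega)$ equals the block-scalar $E_\gamma=\bigoplus_{f\in F}\chi_f(\gamma)\,I_{\mathcal H_f}$ with $\chi_f(\gamma)=e^{2\pi i\langle\gamma,f\rangle}$. Since distinct elements of $F$ represent distinct cosets of $\Lambda^*/\Gamma^*$, the characters $\chi_f$ are pairwise distinct on $\Gamma$.

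The heart of the argument is then a pointwise linear-algebra fact: a closed $J\subseteq\bigoplus_f\mathcal H_f$ is invariant under all $E_\gamma$ if and only if $J=\bigoplus_f(J\cap\mathcal H_f)$, which is precisely the splitting from Step 1. One direction is immediate, as each $E_\gamma$ acts by a scalar on each $\mathcal H_f$ and so preserves any block-split subspace. For the converse I would compute the commutant: the projection $Q$ onto $J$ commutes with every $E_\gamma$ (invariance under the group $\{E_\gamma\}$, closed under adjoints as $E_\gamma^*=E_{-\gamma}$), and writing $Q=(Q_{f,f'})$ in block form the relation $\chi_f(\gamma)Q_{f,f'}=Q_{f,f'}\chi_{f'}(\gamma)$ for all $\gamma$ forces $Q_{f,f'}=0$ for $f\ne f'$, since $\chi_f$ and $\chi_{f'}$ differ at some $\gamma$ (distinct characters, $S$ dense). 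Hence $Q$ is block-diagonal, i.e.\ $J=\bigoplus_f(J\cap\mathcal H_f)$. Chaining the three steps yields $\Gamma$-invariance $\iff$ the fiberwise splitting a.e.\ $\iff$ \eqref{eq:rank}. I expect the main obstacle to be the measure-theoretic bookkeeping of Step 2 — reducing to a countable dense set of translations and exchanging the quantifiers ``for all $\gamma$'' and ``for a.e.\ $\omega$'' — while the linear-algebra core of Step 3 is elementary once the characters $\chi_f$ are seen to be distinct.
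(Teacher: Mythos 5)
Your proof is correct, and it reaches the paper's formula by the same overall strategy --- fiberize over $\R^d/\Lambda^*$, decompose $\ell^2(\Lambda^*)=\bigoplus_{f\in F}\ell^2(f+\Gamma^*)$ by cosets, and recognize $\rk G^{\Lambda}(\omega)$ and $\rk A(\omega+f)$ as the dimensions of $J(\omega)$ and of its blocks $P_fJ(\omega)$ --- but the middle step is genuinely different. The paper converts $\Gamma$-invariance into the statement that $\widehat{V}(\omega)$ is stable under multiplication by \emph{all} bounded $\Gamma^*$-periodic functions $\theta$ on $\Lambda^*$, introduces the saturated space $\widehat{W}(\omega)=\sspan\{\theta\, v_i\}$, and uses the indicator functions $\theta_f$ of the cosets as a basis to read off $\widehat{W}(\omega)=\bigoplus_f P_f\widehat{V}(\omega)$ directly; invariance then becomes $\widehat{V}=\widehat{W}$, i.e.\ equality of dimensions, which is (\ref{eq:rank}). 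You instead keep only the unitaries actually given by the hypothesis --- the fiber operators $E_\gamma=\bigoplus_f\chi_f(\gamma)I_{\mathcal H_f}$ --- and derive the block splitting from a commutant computation, using that the characters $\chi_f$ are pairwise distinct on $\Gamma$ precisely because $F$ represents distinct cosets of $\Lambda^*/\Gamma^*$. The trade-off: your route avoids the (glossed-over in the paper) passage from the exponentials $e^{2\pi i\langle\gamma,\cdot\rangle}$ to the full algebra of bounded $\Gamma^*$-periodic multipliers, at the price of the extra linear-algebra lemma that a finite-dimensional subspace invariant under a family of block-scalar unitaries with distinct block characters must itself split into blocks (which, as you note, is immediate once the projection onto $J(\omega)$ is seen to be block-diagonal, and the adjoint-closure issue is automatic since $E_\gamma$ is unitary and $J(\omega)$ is finite-dimensional). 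Both arguments handle the case $|F|=\infty$ (e.g.\ $\Gamma=\R^d$) since $\dim J(\omega)\le r$ forces all but finitely many blocks to vanish when (\ref{eq:rank}) holds. Your only loose ends are the standard ones you already flag: the a.e.\ quantifier exchange over the countable set $S$, and the fact that range functions of $\Lambda$-invariant subspaces respect inclusions fiberwise, both of which are covered by the structure theory you cite.
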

\begin{proof} 
Recall that a function $\varphi$ belongs to $V^{\Lambda}(\Phi)$ if and only if there exist bounded measurable $\Lambda^*$-periodic functions $P_1,\ldots,P_r$ such that $\widehat{\varphi}=P_1\widehat{\phi_1}+\ldots+P_r\widehat{\phi_r}$. Fiberwise, this is equivalent to saying that for a.e. $\omega$, the vector $(\widehat{\varphi}(\omega+l))_{l\in \Lambda^*}\in \ell^2(\Lambda^*)$ lies in the subspace $\widehat{V}(\omega)$ spanned by the $r$ vectors $(\widehat{\phi_i}(\omega+l))_{l\in \Lambda^*}$ for $i=1,\ldots ,r$. 

Now the space  $V^{\Lambda}(\Phi)$ is $\Gamma$-invariant if for every bounded $\Gamma^*$-periodic function $Q$, and for every $i=1,\ldots ,r$, there are bounded measurable $\Lambda^*$-periodic functions $P_{i,1},\ldots,P_{i,r}$ such that $$Q\widehat{\phi_i}=P_{i,1}\widehat{\phi_1}+\ldots+P_{i,r}\widehat{\phi_r}.$$
Again, this is equivalent to saying that for a.e. $\omega$, and for every bounded $\Gamma^*$-periodic $\theta: \; \Lambda^*\to \C$  the vector $(\theta(\omega+l)\widehat{\phi_i}(\omega+l))_{l\in \Lambda^*}\in \ell^2(\Lambda^*)$ lies in the subspace $\widehat{V}(\omega)$. 

Let us denote $\widehat{W}(\omega)$ the subspace of $ \ell^2(\Lambda^*)$ spanned by $(\theta(\omega+l)\widehat{\phi_i}(\omega+l))_{l\in \Lambda^*}$ for all bounded $\Gamma^*$-periodic $\theta$, and every $i=1,\ldots, r$.  Clearly $\widehat{V}(\omega)$ is a subspace of $\widehat{W}(\omega)$, and the two coincide for a.e. $\omega$ exactly when  $V^{\Lambda}(\Phi)$ is $\Gamma$-invariant. 

The rest of the proof amounts to showing that the left-hand term in (\ref{eq:rank}) corresponds to the dimension of $\widehat{V}(\omega)$ (which is obvious) and that the right-hand term of the equality corresponds to the dimension of $\widehat{W}(\omega)$.
A basis for the space of bounded $\Gamma^*$-periodic functions on $\Lambda^*$ consists of the functions $(\theta_f)_{f\in F}$, where each $\theta_f(l)$ equals $1$ if $l\in f+\Gamma^*$ and $0$ elsewhere. Observe that for all $f\neq f'$, and all $\varphi, \varphi'\in W_{\widehat{\Phi}}(\omega)$, $\theta_f\varphi$ and $\theta_{f'}\varphi'$ are orthogonal.
Hence $\widehat{W}(\omega)$ decomposes as a direct sum $$\widehat{W}(\omega)=\bigoplus_{f\in F}\widehat{W}^f(\omega),$$
where $\widehat{W}^f(\omega)$ is the subspace of functions of the form $\theta_f\varphi$, for $\varphi\in \widehat{W}(\omega).$
But it comes out that  $\widehat{W}^f(\omega)$ is precisely the subspace of $\ell^2(\Lambda^*)$ spanned by $(\widehat{\phi_i}(\omega+f+g))_{g\in \Gamma^*}$, whose dimension equals the rank of $A(\omega+f)$. This finishes the proof of the theorem.
\end{proof}

Although we will be mostly interested in the case where both $\Lambda$ and $\Gamma$ are lattices, we will also use the following special case of Theorem \ref{thm:rank}, where $\Lambda$ is a lattice and $\Gamma=\R^d$ (so $\Gamma^*=\{0\}$).

\begin{corollary}\label{cor:rank}
Let $\Lambda$ be a lattice, and let $V^{\Lambda}(\Phi)$ be a $\Lambda$-invariant space generated by $\phi_1,\ldots ,\phi_r$. Then it is translation-invariant if and only if the following equality holds for a.e. $\omega\in \R^d$.
\begin{eqnarray}\label{eq:rank2}
\mathrm{rank\ } G^{\Lambda}(\omega) &=&\sum_{f\in \Lambda^*} \mathrm{rank\ }A(\omega+f)\\
                                                              & = & \left|\{f\in \Lambda^*, \Phi(\omega+f)\neq 0 \}\right|.
\end{eqnarray}
\end{corollary}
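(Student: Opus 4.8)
The plan is to obtain both equalities as a direct specialization of Theorem~\ref{thm:rank} to the case $\Gamma=\R^d$. First I would verify that the hypotheses of that theorem are satisfied: $\Lambda$ is a lattice, hence a cocompact subgroup, and $\R^d$ is trivially cocompact in itself, with $\Lambda<\R^d$. Since $\Gamma=\R^d$, its Fourier transform is $\Gamma^*=\{0\}$, so a set $F$ of representatives of the quotient $\Lambda^*/\Gamma^*=\Lambda^*/\{0\}$ can be taken to be all of $\Lambda^*$. With these identifications, the equivalence furnished by Theorem~\ref{thm:rank} reads precisely as the first line of~(\ref{eq:rank2}), which is the desired characterization of translation-invariance.

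It then remains to establish the second equality, and the key observation is that when $\Gamma^*=\{0\}$ the matrix $A(\omega)$ collapses to a rank-one outer product. Indeed, the sum defining $A(\omega)$ runs over $\Gamma^*=\{0\}$, so only the $g=0$ term survives and $A(\omega)=\widehat{\Phi}(\omega)\widehat{\Phi}^*(\omega)$. Since $\widehat{\Phi}(\omega)$ is the column vector $(\widehat{\phi_i}(\omega))_{i=1}^{r}$, this is the outer product of $\widehat{\Phi}(\omega)$ with itself; hence $\mathrm{rank\ }A(\omega)=1$ when $\widehat{\Phi}(\omega)\neq 0$ and $\mathrm{rank\ }A(\omega)=0$ when $\widehat{\Phi}(\omega)= 0$. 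In other words $\mathrm{rank\ }A(\omega+f)$ is exactly the indicator of the event $\widehat{\Phi}(\omega+f)\neq 0$. Summing this indicator over $f\in\Lambda^*$ yields $\left|\{f\in\Lambda^*,\ \widehat{\Phi}(\omega+f)\neq 0\}\right|$, which is the second equality. (Here the $\Phi$ appearing in the statement is to be read as $\widehat{\Phi}$, consistently with the definitions of $A$ and $G^{\Lambda}$ through the Fourier transforms of the generators.) As a consistency check, feeding $A(\omega)=\widehat{\Phi}(\omega)\widehat{\Phi}^*(\omega)$ and $F=\Lambda^*$ back into~(\ref{eq:GA}) recovers the original definition of the Gramian $G^{\Lambda}(\omega)$.

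I do not expect any genuine obstacle, since the corollary is a clean specialization rather than a new argument. The only points demanding care are confirming the cocompactness hypotheses so that Theorem~\ref{thm:rank} genuinely applies, carrying out the elementary rank computation for the outer-product matrix $A(\omega)$, and checking that the ``for a.e. $\omega$'' qualifier survives the substitution $\Gamma=\R^d$ — which it does, as that qualifier is already part of the statement of Theorem~\ref{thm:rank}.
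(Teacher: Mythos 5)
Your proposal is correct and matches the paper's intent exactly: the corollary is presented there as an immediate specialization of Theorem~\ref{thm:rank} to $\Gamma=\R^d$, $\Gamma^*=\{0\}$, $F=\Lambda^*$, with the second equality following from $A(\omega)=\widehat{\Phi}(\omega)\widehat{\Phi}^*(\omega)$ being a rank-at-most-one outer product. Your reading of $\Phi(\omega+f)$ as $\widehat{\Phi}(\omega+f)$ is also the intended one.
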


\section{Frame generators and additional invariance}\label{Section:Frame}
The main goal of this section is to reformulate Theorem \ref{thm:rank} under the additional assumption that the generators form a frame for the space $V^{\Lambda}(\Phi)$.

Recall that the family of all $\Lambda$-translates of the generators $\phi_1, \ldots, \phi_r$ form a Riesz basis if and only if there exists $s\geq 1$ such that
\begin{equation}\label{riesz.def}
s^{-1}I\le G^{\Lambda}(\omega)\le sI, \ \text{a.e.} \ \omega\in\R.
\end{equation}
In this case the functions $\phi_i$'s are called \emph{Riesz generators} for $V^{\Lambda}(\Phi)$. 

Similarly, $\{\phi_i(\cdot+\lambda)|\ \lambda\in \Lambda, i=1, \dots, r\}$ is a frame if and only  there exists $s\geq 1$ such that
\begin{equation}\label{upperlower}
s^{-1}G^{\Lambda}(\omega)\le (G^{\Lambda}(\omega))^2\le sG^{\Lambda}(\omega)
\end{equation}
for almost every $\omega\in\R^d$ (see \cite{Bow00}). Here the $\phi_i$'s are called {\it frame generators} for $V^{\Lambda}(\Phi)$. 

Let us start by an easy lemma.
Given a non-negative self-adjoint matrix $A$, denote $q_A$ its associated quadratic form, $\mu^-(A)$ its smallest non-zero eigenvalue and $k_A$ the dimension of its kernel.  Denote the unit sphere of a Euclidean space $V$ by $S_V$.

\begin{lemma}\label{lem:matrices}
Let $C=A+B$ be three $d\times d$ non-negative self-adjoint matrices such that $\rk C=\rk A+\rk B$.
Then $\mu^-(C)\leq \min\{\mu^-(A),\mu^-(B)\}$. 
\end{lemma}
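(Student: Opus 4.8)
The plan is to prove the two inequalities $\mu^-(C)\le\mu^-(A)$ and $\mu^-(C)\le\mu^-(B)$ separately; since $A$ and $B$ play symmetric roles it suffices to establish the first. I would begin by recording two elementary facts about non-negative self-adjoint matrices. Writing $q_A(x)=\langle Ax,x\rangle=\|A^{1/2}x\|^2$, one has $Ax=0\iff q_A(x)=0$, and since $q_C=q_A+q_B$ with both summands non-negative, this gives $\ker C=\ker A\cap\ker B$; taking orthogonal complements yields $\mathrm{range}\,C=\mathrm{range}\,A+\mathrm{range}\,B$, and the hypothesis $\rk C=\rk A+\rk B$ says precisely that this sum is direct. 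I would stress at the outset that the naive idea — feeding an eigenvector of $A$ for $\mu^-(A)$ into $C$ — goes the wrong way, since $q_C\ge q_A$ only makes values \emph{larger}; the point is instead to test $C$ on vectors that are invisible to $B$.

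Concretely, I would introduce the subspace
\[
W:=\ker B\cap \mathrm{range}\,C .
\]
On $W$ one has $Bx=0$, hence $q_C(x)=q_A(x)$, so that, since $W\subseteq\mathrm{range}\,C$,
\[
\mu^-(C)=\min\{q_C(x):x\in\mathrm{range}\,C,\ \|x\|=1\}\le \min_{x\in W\setminus\{0\}}\frac{q_A(x)}{\|x\|^2}.
\]
It then remains only to exhibit a vector in $W$ on which the Rayleigh quotient of $A$ is at most $\mu^-(A)$. The dimension count $\dim W\ge \dim\ker B+\dim\mathrm{range}\,C-d=\rk C-\rk B=\rk A$ — this is exactly where the rank hypothesis enters — shows $W$ is large, while $W\cap\ker A=\mathrm{range}\,C\cap(\ker A\cap\ker B)=\mathrm{range}\,C\cap\ker C=\{0\}$ shows $A$ is non-degenerate on $W$.

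To finish, I would project onto $\mathrm{range}\,A=(\ker A)^\perp$ via the orthogonal projection $\Pi$. The identity $W\cap\ker A=\{0\}$ makes $\Pi|_W$ injective, and together with $\dim W\ge\rk A=\dim\mathrm{range}\,A$ this forces $\Pi|_W$ to be a bijection onto $\mathrm{range}\,A$. For $x\in W$, decomposing $x=\Pi x+(x-\Pi x)$ with $x-\Pi x\in\ker A$ gives $q_A(x)=q_A(\Pi x)$ and $\|x\|\ge\|\Pi x\|$, whence
\[
\frac{q_A(x)}{\|x\|^2}\le \frac{q_A(\Pi x)}{\|\Pi x\|^2}.
\]
As $x$ ranges over $W$ the image $\Pi x$ ranges over all of $\mathrm{range}\,A$, and the minimum of the right-hand side is exactly $\mu^-(A)$, the smallest eigenvalue of $A$ on its range. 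Combining this with the previous display yields $\mu^-(C)\le\mu^-(A)$, and the symmetric argument gives $\mu^-(C)\le\mu^-(B)$. I expect the only genuinely delicate point to be getting the direction of the inequality right: monotonicity of quadratic forms pushes the wrong way, and the whole resolution lies in the choice of the test space $W=\ker B\cap\mathrm{range}\,C$, on which $C$ and $A$ carry identical quadratic forms, together with the observation that orthogonal projection onto $\mathrm{range}\,A$ can only decrease the Rayleigh quotient.
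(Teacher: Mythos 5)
Your proof is correct, and it reaches the conclusion by a route that differs in technique from the paper's, even though both arguments ultimately rest on the same observation that the rank hypothesis forces $\ker A$ and $\ker B$ to be complementary inside $\mathrm{range}\,C$. The paper first reduces to the case where $C$ has full rank, invokes the Courant--Fischer min--max characterization $\mu^-(A)=\min_{\dim V=k_A+1}\max_{x\in S_V}q_A(x)$, and then intersects the minimizing subspace $V_0$ with $\ker B$ (nontrivial because $\dim V_0+\dim\ker B=d+1$) to produce a unit vector $x$ with $\mu^-(A)\geq q_A(x)=q_C(x)\geq\mu^-(C)$. You instead avoid min--max entirely: you test on $W=\ker B\cap\mathrm{range}\,C$, show via the rank count that the orthogonal projection $\Pi$ onto $\mathrm{range}\,A$ restricts to a bijection $W\to\mathrm{range}\,A$, and use the fact that $q_A(x)=q_A(\Pi x)$ while $\|x\|\geq\|\Pi x\|$ to pull the minimizing eigenvector of $A$ back to a vector of $W$ with $A$-Rayleigh quotient at most $\mu^-(A)$. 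Your version is more self-contained (no appeal to the min--max theorem) and makes the role of the rank hypothesis very explicit in the dimension count $\dim W\geq\rk A$, at the cost of being somewhat longer; the paper's version is shorter but leans on Courant--Fischer and on the preliminary reduction to full-rank $C$. Both share the (harmless) implicit assumption that $A$ and $B$ are nonzero, since otherwise $\mu^-$ is undefined. All the steps you flag as delicate check out: in particular $\ker C=\ker A\cap\ker B$ for non-negative self-adjoint summands, $W\cap\ker A=\mathrm{range}\,C\cap\ker C=\{0\}$, and the surjectivity of $\Pi|_W$ needed in the final minimization.
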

\begin{proof}
Note that we can assume  without loss of generality that $C$ has full rank. Observe that $\Ker A$ and $\Ker B$ are in direct sum. By the min-max theorem,
$$\mu^-(A)=\min_{\dim V=k_A+1}\;\max_{x\in S_V}q_A(x).$$
Now let $V_0$ be a subspace minimizing the above expression. Let $x\in V_0\cap \Ker B$ of norm $1$. We have
$$\mu^-(A)\geq q_A(x)=q_A(x)+q_B(x)=q_C(x)\geq \mu^-(C).$$
We conclude since the roles of $A$ and $B$ are symmetric.
\end{proof}

The following theorem will play a central role in the sequel.

\begin{theorem}\label{thm:frame}
Let $\Lambda<\Gamma$ be closed cocompact subgroups of $\R^d$. Let $\Phi$ be finite set of generators of $V^{\Lambda}(\Phi)$. Suppose
the space $V^{\Lambda}(\Phi)$ is $\Gamma$-invariant and $\Phi$ are frame generators.
Then, there exists $s\geq 1$ such that the formula (\ref{eq:rank}) and the following inequality hold  for a.e. $\omega\in \R^d$ 
\begin{equation}\label{eq:frame}
s^{-1}A(\omega)\leq A(\omega)^2\leq sA(\omega).
\end{equation}
\end{theorem}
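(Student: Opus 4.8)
The plan is to treat the two assertions separately: the rank formula comes for free, and all the work goes into the operator inequality. Since $V^\Lambda(\Phi)$ is assumed $\Gamma$-invariant, Theorem \ref{thm:rank} applies verbatim and delivers (\ref{eq:rank}) for a.e. $\omega$, so it only remains to produce (\ref{eq:frame}). First I would translate both the hypothesis and the conclusion into spectral language. Applied to the non-negative self-adjoint matrix $G^\Lambda(\omega)$, the frame inequality (\ref{upperlower}) says exactly that every non-zero eigenvalue of $G^\Lambda(\omega)$ lies in $[s^{-1},s]$, i.e. $\mu^-(G^\Lambda(\omega))\ge s^{-1}$ and $\norm{G^\Lambda(\omega)}\le s$ for a.e. $\omega$; and the desired (\ref{eq:frame}) is the identical confinement statement for $A(\omega)$, namely that all its non-zero eigenvalues lie in $[s^{-1},s]$.

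The upper bound is the easy half. Using (\ref{eq:GA}) and the fact that every summand is non-negative, $G^\Lambda(\omega)-A(\omega)=\sum_{f\in F\setminus\{0\}}A(\omega+f)\ge 0$, so $0\le A(\omega)\le G^\Lambda(\omega)\le sI$. Hence the largest eigenvalue of $A(\omega)$ is at most $s$, which is precisely $A(\omega)^2\le sA(\omega)$.

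The lower bound is the crux, and this is exactly where Lemma \ref{lem:matrices} is designed to be used. Fix $f\in F$ and write $G^\Lambda(\omega)=A(\omega+f)+B$ with $B:=G^\Lambda(\omega)-A(\omega+f)=\sum_{f'\ne f}A(\omega+f')\ge 0$. I would first check that the rank splits, $\rk G^\Lambda(\omega)=\rk A(\omega+f)+\rk B$, by a sandwich: subadditivity of rank gives $\rk G^\Lambda(\omega)\le \rk A(\omega+f)+\rk B$, while subadditivity applied to $B$ together with the already-established formula (\ref{eq:rank}) gives $\rk B\le \sum_{f'\ne f}\rk A(\omega+f')=\rk G^\Lambda(\omega)-\rk A(\omega+f)$; the two inequalities force equality. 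Now Lemma \ref{lem:matrices}, applied to $C=G^\Lambda(\omega)$, $A=A(\omega+f)$ and this $B$, yields $\mu^-(A(\omega+f))\ge \mu^-(G^\Lambda(\omega))\ge s^{-1}$ (the bound being vacuous precisely when $A(\omega+f)=0$, where (\ref{eq:frame}) holds trivially). Combining with the upper bound, every non-zero eigenvalue of $A(\omega+f)$ lies in $[s^{-1},s]$, i.e. $s^{-1}A(\omega+f)\le A(\omega+f)^2\le sA(\omega+f)$. Taking the representative $f=0$ then gives (\ref{eq:frame}) for a.e. $\omega$.

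The only genuinely delicate point is this last step: one must recognize that the rank hypothesis (\ref{eq:rank}) is exactly the condition that makes the ranges of the matrices $A(\omega+f)$ split into a direct sum, which is what allows Lemma \ref{lem:matrices} to transfer a lower bound on the smallest non-zero eigenvalue of the whole Gramian to each individual block. The upper bound, and the dictionary between the frame inequalities and eigenvalue confinement, are routine by comparison.
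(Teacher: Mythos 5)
Your proof is correct and follows essentially the same route as the paper: Theorem \ref{thm:rank} for the rank identity, positivity of the summands in (\ref{eq:GA}) for the upper bound, and Lemma \ref{lem:matrices} for the lower bound. The only cosmetic difference is that the paper invokes the lemma via an inductive extension to several summands, whereas you group all remaining terms into a single matrix $B$ and verify the rank-additivity hypothesis by a sandwich argument --- the same idea with the details made explicit.
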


\begin{proof}
By Theorem \ref{thm:rank}, it is enough to prove that (\ref{upperlower}) implies (\ref{eq:frame}) (up to changing the constant $s$). 
First observe that the previous lemma can be extended (by induction) to a sum of more than two matrices. We deduce that the lower bound in (\ref{upperlower}) implies that of (\ref{eq:frame}). The equivalence between the upper bounds follows from the fact that the number of non-zero matrices in the right-hand side of (\ref{eq:GA}) is bounded by $r$.
\end{proof}

We immediately get the following corollary.

\begin{corollary}\label{cor:frame}
Let $\Lambda$ be a closed cocompact subgroups of $\R^d$. 
Assume that the space $V^{\Lambda}(\Phi)$ is translation-invariant and $\Phi$ are frame generators. Then $Tr(A(\omega))$ is not continuous, nor in $H^{\frac 12}$.
\end{corollary}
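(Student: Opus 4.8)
The plan is to reduce Corollary \ref{cor:frame} to the one-dimensional (scalar) uncertainty principle already invoked in the introduction, by showing that $\mathrm{Tr}(A(\omega))$ behaves exactly like $|\widehat\phi(\omega)|^2$ does in the principal translation-invariant case. First I would apply Theorem \ref{thm:frame} in the special case $\Gamma=\R^d$, so that $\Gamma^*=\{0\}$ and hence $A(\omega)=\widehat\Phi(\omega)\widehat\Phi^*(\omega)$ is a rank-one (or zero) positive semidefinite matrix for a.e. $\omega$. For such a matrix the single nonzero eigenvalue is exactly $\mathrm{Tr}(A(\omega))=\sum_i|\widehat\phi_i(\omega)|^2=\|\widehat\Phi(\omega)\|^2$, and the frame inequality (\ref{eq:frame}), namely $s^{-1}A(\omega)\le A(\omega)^2\le sA(\omega)$, forces this eigenvalue to lie in $\{0\}\cup[s^{-1},s]$ for a.e.\ $\omega$. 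Thus $\mathrm{Tr}(A(\omega))$ takes values in a set bounded away from $0$ on its support and is bounded above; in other words, writing $A_0$ for the support $\{\omega:\mathrm{Tr}(A(\omega))\ne 0\}$, one has $s^{-1}1_{A_0}(\omega)\le \mathrm{Tr}(A(\omega))\le s\,1_{A_0}(\omega)$.

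Next I would argue that $A_0$ has finite (positive) measure. Finiteness follows because $\mathrm{Tr}(A(\omega))=\|\widehat\Phi(\omega)\|^2$ is integrable (the $\phi_i$ lie in $L^2$), so a function bounded below by $s^{-1}$ on $A_0$ can only be integrable if $|A_0|<\infty$; positivity holds since $V^{\Lambda}(\Phi)\ne\{0\}$. At this point $\mathrm{Tr}(A(\omega))$ is sandwiched between two multiples of the indicator of a finite-measure set, which is precisely the structure recalled in the introduction that obstructs both continuity and membership in $H^{1/2}$. Concretely, a function satisfying $s^{-1}1_{A_0}\le f\le s\,1_{A_0}$ with $0<|A_0|<\infty$ cannot be continuous, because continuity together with the lower bound $s^{-1}$ on $A_0$ would force $f$ to be bounded below on the closure $\overline{A_0}$ while vanishing on the open complement, contradicting the intermediate value property across $\partial A_0$ (which is nonempty since $0<|A_0|<\infty$ means $A_0$ is neither null nor co-null).

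For the Sobolev assertion I would invoke the same mechanism cited for $\cite{KW99}$ in the introduction: a function that jumps across a boundary of positive codimension, being essentially a multiple of $1_{A_0}$, fails to lie in $H^{1/2}(\R^d)$. The cleanest route is to note that the lower and upper bounds make $\mathrm{Tr}(A(\omega))$ comparable to $1_{A_0}$ in the sense that $|\mathrm{Tr}(A(\omega))-\mathrm{Tr}(A(\omega'))|\ge s^{-1}$ whenever exactly one of $\omega,\omega'$ lies in $A_0$; one then estimates the Gagliardo seminorm $\int\!\!\int |f(\omega)-f(\omega')|^2|\omega-\omega'|^{-d-1}\,d\omega\,d\omega'$ from below by integrating over pairs separated by $\partial A_0$ and checks it diverges, exactly as for the characteristic function of a ball. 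Since $\mathrm{Tr}(A)\ge s^{-1}1_{A_0}$ and $\mathrm{Tr}(A)\le s\,1_{A_0}$, this lower bound on the difference survives, so $\mathrm{Tr}(A)\notin H^{1/2}$.

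The main obstacle I anticipate is making the continuity and $H^{1/2}$ failures rigorous without assuming regularity of $\partial A_0$: the set $A_0$ is only a measurable set of finite positive measure, so I cannot speak of a smooth boundary or a clean jump. The right formulation is therefore measure-theoretic. For continuity one uses that a continuous function comparable to $1_{A_0}$ would have $\{f\ge s^{-1}/2\}$ open and of measure $|A_0|$ and $\{f\le s^{-1}/2\}$ open and of positive measure, yet these partition $\R^d$ up to a null set, contradicting connectedness of $\R^d$; this is the delicate point to phrase correctly. For the $H^{1/2}$ statement the cleanest rigorous argument is the known fact that $1_E\in H^{1/2}(\R^d)$ is impossible for any measurable $E$ with $0<|E|<\infty$ (a quantitative isoperimetric-type fact about the fractional Sobolev norm of indicators), which I would either cite or derive from the Gagliardo seminorm computation above, and then transfer to $\mathrm{Tr}(A)$ via the two-sided comparison.
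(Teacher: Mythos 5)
Your proposal is correct and follows essentially the same route as the paper: apply Theorem \ref{thm:frame} with $\Gamma=\R^d$ to see that $\mathrm{Tr}(A(\omega))$ is bounded below by $s^{-1}$ on a set of positive finite measure and vanishes elsewhere, then conclude non-continuity and failure of $H^{1/2}$ membership via the indicator-function obstruction of \cite{KW99}. You simply spell out the measure-theoretic details (connectedness for continuity, the Gagliardo-seminorm transfer for $H^{1/2}$) that the paper leaves implicit.
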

\begin{proof}
By Theorem \ref{thm:frame}, the map $\omega\to Tr(A(\omega))$ is larger than $s^{-1}$ on a set of positive (but finite) measure, and equals zero elsewhere. In particular it is not continuous and not in $H^{\frac 12}$ (by \cite{KW99}).
\end{proof}

\begin{lemma}\label{lem: constantrank}
Let $(M(x))_{x\in \R^d}$ be a continuous family of $r\times r$ non-negative self-adjoints matrices with complex coefficients. Assume that there exists $s\geq 1$ such that 
$s^{-1}M(x)\leq M^2(x)$ for all $x\in \R^d$. Then the rank of $M$ is constant.
\end{lemma}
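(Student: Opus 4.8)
The plan is to convert the operator inequality into a uniform spectral gap and then argue by connectedness of $\R^d$. First I would unpack the hypothesis $s^{-1}M(x)\le M(x)^2$. Since $M(x)$ is non-negative self-adjoint, diagonalize it as $M(x)=\sum_i \lambda_i(x)P_i(x)$ with eigenvalues $\lambda_i(x)\ge 0$ and orthogonal spectral projections $P_i(x)$. Then $M(x)^2-s^{-1}M(x)=\sum_i\bigl(\lambda_i(x)^2-s^{-1}\lambda_i(x)\bigr)P_i(x)$, so the inequality holds iff $\lambda_i(x)\bigl(\lambda_i(x)-s^{-1}\bigr)\ge 0$ for every $i$. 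As $\lambda_i(x)\ge 0$, this forces each eigenvalue into the disconnected set $\{0\}\cup[s^{-1},\infty)$, uniformly in $x$; in particular no eigenvalue ever lies in the forbidden window $(0,s^{-1})$. Consequently $\rk M(x)$ equals the number of eigenvalues that are $\ge s^{-1}$, i.e. the number of nonzero eigenvalues.

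Next I would invoke continuity of the eigenvalues. Writing the eigenvalues in decreasing order $\lambda_1(x)\ge\cdots\ge\lambda_r(x)\ge 0$, Weyl's perturbation inequality gives $|\lambda_i(x)-\lambda_i(y)|\le\norm{M(x)-M(y)}$, so each ordered eigenvalue $x\mapsto\lambda_i(x)$ is a continuous (indeed $1$-Lipschitz) function on $\R^d$. This is the one external fact the argument relies on, and it is classical.

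Now the connectedness step finishes the proof. Each continuous map $\lambda_i:\R^d\to\{0\}\cup[s^{-1},\infty)$ has connected image because $\R^d$ is connected; since $\{0\}$ and $[s^{-1},\infty)$ are separated by the gap $(0,s^{-1})$, the image must lie entirely in one piece. Hence every $\lambda_i$ is either identically $0$ or everywhere $\ge s^{-1}$ (so never vanishing). The set of indices $i$ with $\lambda_i(x)\neq 0$ is therefore independent of $x$, and $\rk M(x)$, being the cardinality of that set, is constant.

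I do not expect a genuine obstacle here: the only nonelementary ingredient is the continuity of the sorted eigenvalues, and the rest is a one-line topological argument. If one prefers to bypass eigenvalue continuity, the same conclusion follows from the Riesz spectral projection: on a neighborhood $U$ of any point, $\norm{M(x)}\le R$ by continuity, so a fixed contour in $\C$ enclosing $[s^{-1},R]$ but excluding $0$ yields $P(x)=\tfrac{1}{2\pi i}\oint_\gamma (zI-M(x))^{-1}\,dz$, a continuous family of orthogonal projections onto $\mathrm{range}\,M(x)$ with $\mathrm{Tr}\,P(x)=\rk M(x)$ integer-valued, hence locally constant, and constant on the connected space $\R^d$.
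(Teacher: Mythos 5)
Your proof is correct, but it takes a genuinely different route from the paper's. You convert the hypothesis $s^{-1}M(x)\le M(x)^2$ into a spectral gap --- every eigenvalue lies in $\{0\}\cup[s^{-1},\infty)$ --- and then combine continuity of the sorted eigenvalues (Weyl's perturbation bound) with connectedness of $\R^d$ to conclude that the number of nonzero eigenvalues cannot change; your Riesz-projection variant is the standard fact that spectral projections vary continuously when a contour avoids the spectrum. The paper never invokes eigenvalue continuity or functional calculus: it uses only that the rank of a continuous matrix family is lower semi-continuous, takes $F$ to be the closed set where the rank attains its minimum, and shows $F$ is open by contradiction --- if $x_n\in F^c$ tends to $x_0\in F$, a dimension count yields unit vectors $u_n\in \mathrm{range}\,M(x_n)\cap\Ker M(x_0)$, for which the hypothesis forces $\norm{M(x_n)u_n}\ge s^{-1}$ while continuity forces $M(x_n)u_n\to 0$. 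Both arguments exploit the same uniform gap; yours makes the spectral picture explicit and is arguably more transparent, at the cost of importing Weyl's inequality (or holomorphic functional calculus), whereas the paper's is self-contained, relying on nothing beyond semi-continuity of rank and compactness of the unit sphere.
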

\begin{proof}
Since $x\to M(x)$ is continuous, its rank  is lower semi-continuous.  Let $F$ be the (closed) subset where $\mathrm{rank\ }  M$ reaches its minimal value. Assume that $F$ is not open, which means that there exists a sequence $x_n\in F^c$ converging to some $x_0\in F$. It follows that the rank of $M(x_n)$ is strictly larger than the rank of $M(x_0)$, which therefore implies that  the range of $M(x_n)$ intersects non-trivially the kernel of $M(x_0)$. Let $u_n$ be a sequence of unit vectors lying in this intersection. On one hand, because $u_n$ is in the range of $M(x_n)$,  we must have $\|M(x_n)u_n\|\geq s^{-1}$. On the other hand, the continuity of $M$ together with the fact that the limit lies in the kernel implies $M(x_n)u_n\to 0$, contradiction. So $F$ is open which implies $F=\R^d$ hence the lemma. 
\end{proof}

\begin{corollary}\label{cor:frame'}
Let $\Lambda<\Gamma$ be closed cocompact subgroups of $\R^d$. 
Assume that the space $V^{\Lambda}(\Phi)$ is $\Gamma$-invariant and $\Phi$ are frame generators. Then if $A(\omega)$ is continuous, its rank is constant.
\end{corollary}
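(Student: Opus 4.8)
The plan is to combine Theorem~\ref{thm:frame} with Lemma~\ref{lem: constantrank}. The corollary is stated under the hypothesis that $A(\omega)$ is continuous, and the conclusion is exactly that its rank is constant. Lemma~\ref{lem: constantrank} already furnishes this conclusion for any continuous family of non-negative self-adjoint matrices satisfying a lower bound of the form $s^{-1}M(x)\leq M^2(x)$. So the entire burden of the proof is to verify that the matrix field $A(\omega)$ satisfies both hypotheses of that lemma.

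First I would observe that the continuity hypothesis is given to us directly: we assume $A(\omega)$ is continuous. The matrices $A(\omega)$ are non-negative self-adjoint by construction, since $A(\omega)=\sum_{g\in\Gamma^*}\widehat{\Phi}(\omega+g)\widehat{\Phi}^*(\omega+g)$ is manifestly a sum of rank-one positive semi-definite matrices. Next, the crucial spectral input comes from Theorem~\ref{thm:frame}: under the standing hypotheses (that $V^{\Lambda}(\Phi)$ is $\Gamma$-invariant and $\Phi$ are frame generators), there exists $s\geq 1$ with
\begin{equation*}
s^{-1}A(\omega)\leq A(\omega)^2\leq sA(\omega)
\end{equation*}
for a.e. $\omega$. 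Discarding the upper bound, the lower inequality $s^{-1}A(\omega)\leq A(\omega)^2$ is precisely the hypothesis $s^{-1}M(x)\leq M^2(x)$ required by Lemma~\ref{lem: constantrank}.

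The one subtlety I would flag is the almost-everywhere qualifier. Theorem~\ref{thm:frame} gives the inequality only for a.e. $\omega$, whereas Lemma~\ref{lem: constantrank} wants it for every $x$. However, since $A$ is assumed continuous, and the set of $\omega$ where $s^{-1}A(\omega)\leq A(\omega)^2$ holds has full measure hence is dense, the inequality $s^{-1}A(\omega)\leq A(\omega)^2$ extends to all $\omega$ by continuity: both $A$ and $A^2$ are continuous, so the closed condition $A(\omega)^2-s^{-1}A(\omega)\geq 0$ (as a non-negative self-adjoint matrix, equivalently $q_{A^2-s^{-1}A}\geq 0$) passes to the closure of a dense set. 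I expect this density argument to be the only genuine step requiring care; once it is in place, Lemma~\ref{lem: constantrank} applies verbatim to $M=A$ and yields that the rank of $A(\omega)$ is constant, completing the proof.
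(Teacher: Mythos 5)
Your proof is correct and follows exactly the paper's route: the paper's own proof is the one-line observation that the corollary follows immediately from Lemma~\ref{lem: constantrank} and Theorem~\ref{thm:frame}. Your extra care in upgrading the almost-everywhere inequality to an everywhere inequality by continuity and density is a legitimate detail that the paper leaves implicit, and it is handled correctly.
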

\begin{proof}The corollary follows immediately from Lemma \ref{lem: constantrank} and Theorem \ref{thm:frame}.
\end{proof}

\section{Properties of the Gramian when the $\widehat{\phi}_i$'s are continuous}\label{section:continuous}

Although this section mainly serves as preparation for Theorem \ref{epsilonthm}, we believe that it is of independent interest and could be useful elsewhere.

From now on, $\Lambda$ will always denote a lattice in $\R^d$.
The following statement is essentially trivial (and was observed for instance in \cite{Bow00, DDR94}). 
\begin{proposition}\label{lem:min}
Let $\Lambda$ be a lattice in $\R^d$, and 
let $\Phi=(\phi_1,\ldots, \phi_r)$ be a generating set  for the $\Lambda$-invariant space $V^{\Lambda}(\Phi)$. Then the minimal number $\rho$ of generators of $V^{\Lambda}(\Phi)$ equals the essential supremum of  $\rk G^{\Lambda}(\omega)$. Equivalently, if $G^{\Lambda}(\omega)$ is non invertible a.e., then one can find a generating set $\Phi'$ with $r-1$ generators. 
\end{proposition}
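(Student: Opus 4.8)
The plan is to reduce everything to the fiberwise description of $V^{\Lambda}(\Phi)$ used in the proof of Theorem~\ref{thm:rank}, and to treat the ``equivalently'' assertion as the key mechanism. For a.e. $\omega$ the matrix $G^{\Lambda}(\omega)$ is precisely the Gram matrix of the $r$ vectors $u_i(\omega)=(\widehat{\phi_i}(\omega+l))_{l\in\Lambda^*}\in\ell^2(\Lambda^*)$, so that $\rk G^{\Lambda}(\omega)=\dim\widehat{V}(\omega)$, where $\widehat{V}(\omega)=\sspan\{u_1(\omega),\dots,u_r(\omega)\}$. The inequality $\esssup_{\omega}\rk G^{\Lambda}(\omega)\le\rho$ is then immediate: if $(\phi'_1,\dots,\phi'_m)$ is any generating set, then for a.e. $\omega$ its $m$ fiber vectors span $\widehat{V}(\omega)$, whence $\dim\widehat{V}(\omega)\le m$ almost everywhere.

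For the reverse inequality it suffices to establish the second (equivalent) statement: that $\rk G^{\Lambda}(\omega)<r$ a.e. allows one to pass to $r-1$ generators. So assume $G^{\Lambda}(\omega)$ is non-invertible a.e. I would select a measurable, $\Lambda^*$-periodic unit vector $v(\omega)=(v_1(\omega),\dots,v_r(\omega))\in\Ker G^{\Lambda}(\omega)$. Since $G^{\Lambda}(\omega)$ is the Gram matrix of the $u_i(\omega)$, one computes $\sum_{l\in\Lambda^*}\bigl|\sum_i\overline{v_i(\omega)}\,\widehat{\phi_i}(\omega+l)\bigr|^2=v(\omega)^*G^{\Lambda}(\omega)v(\omega)=0$; using the $\Lambda^*$-periodicity of the $v_i$, this says exactly that the function $\psi$ with $\widehat{\psi}=\sum_i\overline{v_i}\,\widehat{\phi_i}$ vanishes identically. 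I would then complete the unit row $(\overline{v_1},\dots,\overline{v_r})$ to a measurable, $\Lambda^*$-periodic unitary matrix $U(\omega)$, placing it as the last row, and define new generators $\psi_1,\dots,\psi_r$ by $\widehat{\psi_k}=\sum_j U_{kj}\widehat{\phi_j}$. The entries $U_{kj}$ being bounded and $\Lambda^*$-periodic, each $\psi_k\in V^{\Lambda}(\Phi)$, and since $U^*$ is also $\Lambda^*$-periodic the relation $\widehat{\phi_j}=\sum_k\overline{U_{kj}}\,\widehat{\psi_k}$ shows that the $\psi_k$ generate $V^{\Lambda}(\Phi)$; but the choice of the last row forces $\psi_r=0$, so $\psi_1,\dots,\psi_{r-1}$ already generate $V^{\Lambda}(\Phi)$, which is the desired reduction.

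Iterating this reduction while the current number of generators exceeds $\esssup_{\omega}\rk G^{\Lambda}(\omega)$ --- a quantity unchanged along the way, since every successive Gramian has rank $\dim\widehat{V}(\omega)$ a.e. --- terminates at a generating set of exactly $\esssup_{\omega}\rk G^{\Lambda}(\omega)$ functions, giving $\rho\le\esssup_{\omega}\rk G^{\Lambda}(\omega)$ and hence equality. The main obstacle is the measurable, $\Lambda^*$-periodic selection of the kernel vector $v(\omega)$ (and of the completing unitary $U(\omega)$), since the rank of $G^{\Lambda}$, and thus $\dim\Ker G^{\Lambda}(\omega)$, need not be constant. I would handle this by partitioning a fundamental domain of $\R^d/\Lambda^*$ into the measurable level sets $\{\omega:\rk G^{\Lambda}(\omega)=k\}$ for $0\le k\le r-1$; on each such set the orthogonal projection onto $\Ker G^{\Lambda}(\omega)$ is a measurable field of projections of constant rank $r-k\ge 1$, from which a measurable unit kernel vector, and then a unitary completion via a measurable Gram--Schmidt procedure, can be extracted by a standard measurable selection argument, everything being extended $\Lambda^*$-periodically.
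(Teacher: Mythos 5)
Your proposal is correct and follows essentially the same route as the paper: both reduce to the fiberwise picture in which $\rk G^{\Lambda}(\omega)=\dim\widehat{V}(\omega)$, and both remove one redundant generator by a measurable selection over a fundamental domain of $\Lambda^*$. The only difference is implementational --- you kill a generator by conjugating with a measurable $\Lambda^*$-periodic unitary whose last row is a unit kernel vector of $G^{\Lambda}(\omega)$, whereas the paper discards the minimal-index fiber vector lying in the span of the others and relabels; your variant is slightly more explicit about why the new functions lie in $L^2$ and generate the same space.
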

\begin{proof}
This is an essentially trivial statement. Let $K$ be a fundamental domain for the action of $\Lambda^*$ (for example take $K=[0,1)^d$ if $\Lambda^*=\Z^d$). For every $\omega\in  K$, let $V^{\Lambda}(\omega)$ be the subspace of $\ell^2(\Lambda^*)$ spanned by the $r$ vectors $v_i(\omega)=\widehat{\phi_i}(\omega+l)_{l\in \Lambda^*}$ for $i=1,\ldots,r$. Saying that $G^{\Lambda}$ is non-invertible a.e. amounts to the fact that $\dim V^{\Lambda}(\omega)\leq r-1$ for a.e. $\omega\in  K$. The idea is to remove for a.e. $\omega$ one vector $v_i(\omega)$ and to recombine the other ones in order to get a new set of $r-1$ generators for $V^{\Lambda}(\Phi)$. Now, in order to do this in a measurable way, we pick the $v_i(\omega)$ of minimal index with the property that it lies in the vector space spanned by the other ones. Then we relabel the remaining ones respecting their order: for instance if $v_2$ is removed, then $v_1$ becomes $v_1'$, $v_3$ becomes $v_2'$ and so on. Now since every point of $\R^d$ can be written as $\omega+l$, for some unique $(\omega,l)\in K\times \Lambda^*$, we can define our new $r-1$ generators by the formula $\phi'_i(\omega+l)=(v'_i(\omega))_l$, for all $l\in \Lambda^*$.  
\end{proof}

We deduce from this proposition that the essential supremum of $\rk G(\omega)$ equals $\rho$.

\begin{lemma}\label{lem:semicont}
Suppose that the functions $\widehat{\phi}_i$ are continuous for $i=1, \dots, r$. Then the maps 
$\omega\to \rk A(\omega)$ and $\omega\to \rk G^{\Lambda}(\omega)$ from $\R^d$ to $\N$ are lower semi-continuous. 
\end{lemma}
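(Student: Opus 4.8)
The plan is to prove lower semi-continuity of the rank for both families by exploiting the fact that each matrix is built from continuous data, and that lower semi-continuity of rank is a completely general phenomenon for continuous matrix-valued maps. Recall that for any continuous family of matrices $x\mapsto M(x)$, the rank is lower semi-continuous: if $\rk M(x_0)=k$, then $M(x_0)$ has a nonvanishing $k\times k$ minor, and by continuity of that minor this persists in a neighborhood of $x_0$, forcing $\rk M(x)\geq k$ nearby. So the entire content of the lemma is reduced to checking that $\omega\mapsto A(\omega)$ and $\omega\mapsto G^{\Lambda}(\omega)$ are continuous (or at least, that each fixed minor is continuous) once the $\widehat{\phi}_i$ are continuous.

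First I would treat $A(\omega)=\sum_{g\in\Gamma^*}\widehat{\Phi}(\omega+g)\widehat{\Phi}^*(\omega+g)$. Since $\Lambda$ is a lattice, $\Lambda^*$ is discrete and cocompact, and the same holds for $\Gamma^*<\Lambda^*$; thus the series defining each entry $A_{i,j}(\omega)=\sum_{g\in\Gamma^*}\widehat{\phi_i}(\omega+g)\overline{\widehat{\phi_j}}(\omega+g)$ is a sum over a discrete set. Each summand is continuous in $\omega$ because the $\widehat{\phi_i}$ are. The point I must address is that a pointwise sum of continuous functions need not be continuous, so I would establish \emph{local uniform convergence} of the series. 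Because $\widehat{\phi_i}\in L^2(\R^d)$, the function $\omega\mapsto\sum_{g\in\Gamma^*}|\widehat{\phi_i}(\omega+g)|^2$ is integrable over a fundamental domain of $\Gamma^*$, hence finite for a.e. $\omega$; combined with continuity one gets that on any compact set the tails $\sum_{|g|>R}|\widehat{\phi_i}(\omega+g)|^2$ can be made uniformly small, giving local uniform convergence of $A_{i,j}(\omega)$ by Cauchy--Schwarz. This yields continuity of each entry of $A$, and then lower semi-continuity of $\rk A$ follows from the general minor argument above. The identical reasoning applies verbatim to $G^{\Lambda}(\omega)=\sum_{l\in\Lambda^*}\widehat{\Phi}(\omega+l)\widehat{\Phi}^*(\omega+l)$, now summing over $\Lambda^*$.

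The main obstacle I anticipate is the subtlety that $A$ and $G^{\Lambda}$ are defined only \emph{a.e.}, and that a continuous representative need not exist globally even if the $\widehat{\phi_i}$ are continuous: the series might genuinely fail to converge on a set of measure zero, or the local-uniform-convergence estimate might need care near points where $\sum_g|\widehat{\phi_i}(\omega+g)|^2=+\infty$. I would handle this by noting that the lemma asserts lower semi-continuity as maps $\R^d\to\N$, which one should interpret at points where the matrices are genuinely defined (finite); on the set where the entries are finite, the minor argument applies directly once local uniform convergence is in hand. Alternatively, since the downstream use is only for a.e. $\omega$, one may restrict attention to the full-measure set where all entries converge, and there the continuity--hence lower semi-continuity--argument goes through cleanly. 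The remaining steps are routine and symmetric in the two cases, so I would state the estimate once and invoke it for both $A$ and $G^{\Lambda}$.
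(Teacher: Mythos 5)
Your reduction of the lemma to ``continuity of $A$ and $G^{\Lambda}$'' is where the argument breaks: these matrices need \emph{not} be continuous under the stated hypotheses, and the step you use to get continuity --- deducing local uniform convergence of the tails $\sum_{|g|>R}|\widehat{\phi_i}(\omega+g)|^2$ from a.e.\ finiteness of the periodization plus continuity of each summand --- is false. A pointwise (even everywhere) convergent sum of continuous nonnegative functions that is integrable over a fundamental domain can still have a discontinuous sum. Concretely, take $\widehat{\phi}$ continuous and in $L^2(\R)$, built from disjoint bumps of height $1$ and width $4^{-g}$ centered at $g+\frac{1}{g}$ for $g\in\N$: then $\sum_{g\in\Z}|\widehat{\phi}(\omega+g)|^2$ equals $0$ at $\omega=0$ but is $\geq 1$ at each $\omega=\frac1m$, so the $1\times1$ Gramian is discontinuous at $0$. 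Your fallback of restricting to a full-measure set does not rescue the lemma either, because the downstream application (Proposition \ref{prop:det}) needs lower semi-continuity of $\rk A$ and $\rk G^{\Lambda}$ as functions on \emph{all} of $\R^d$, in order to conclude that $\{\omega:\rk G^{\Lambda}(\omega)<\rho\}$ is closed and that $\rk A$ is locally constant everywhere.

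The paper's proof sidesteps continuity of $A$ entirely by exploiting positivity: since $A(\omega)$ is a sum of the continuous positive semi-definite matrices $\widehat{\Phi}(\omega+g)\widehat{\Phi}(\omega+g)^*$, the kernel of the sum is the intersection of the kernels, so $\rk A(\omega)$ equals the supremum over finite subsets $S\subset\Gamma^*$ of $\rk\sum_{g\in S}\widehat{\Phi}(\omega+g)\widehat{\Phi}(\omega+g)^*$. Each finite partial sum is a genuinely continuous matrix-valued map, so its rank is lower semi-continuous by exactly the minor argument you give, and a supremum of lower semi-continuous functions is lower semi-continuous; the statement for $G^{\Lambda}$ follows the same way (or from $G^{\Lambda}(\omega)=\sum_{f\in F}A(\omega+f)$). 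Your general principle about ranks of continuous matrix families is correct and is the right ingredient, but it must be applied to the finite partial sums, not to the (possibly discontinuous) infinite sums.
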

\begin{proof}
Since $G^{\Lambda}(\omega)=\sum_F A(\omega+f)$ it is enough to show that $\rk A(\omega)$ is lower semi-continuous. 
Observe that $A(\omega)$ is the sum over $g\in \Gamma^*$ of the continuous positive semi-definite matrices $\widehat{\Phi}(\omega+g) \widehat{\Phi}(\omega+g)^*$. Therefore the rank of $A(\omega)$ is the supremum of the ranks of all partial finite sums. Since lower semi-continuity is stable under taking supremums, we deduce that $\rk A(\omega)$ is lower semi-continuous.
\end{proof}

\begin{proposition}\label{prop:det}
 Suppose that the functions $\widehat{\phi}_i$ are continuous, and that $V^{\Lambda}(\Phi)$ is $\Gamma$-invariant for some lattice $\Gamma$ such that $[\Gamma:\Lambda]$ does not divide $\rho$.  Then the subset $\{\omega, \; \rk G^{\Lambda}(\omega)<\rho\}$ is a non-empty closed subset of $\R^d$. 
\end{proposition}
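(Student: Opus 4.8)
The plan is to establish two things separately: that the set $S=\{\omega,\ \rk G^{\Lambda}(\omega)<\rho\}$ is closed, and that it is nonempty. Closedness is the easy half. By Lemma \ref{lem:semicont}, under the continuity hypothesis on the $\widehat{\phi}_i$, the map $\omega\mapsto \rk G^{\Lambda}(\omega)$ is lower semi-continuous. Since $\rho$ is an integer, the sublevel set $\{\omega,\ \rk G^{\Lambda}(\omega)\le \rho-1\}$ is exactly the complement of the set where $\rk G^{\Lambda}\ge \rho$, and lower semi-continuity forces the latter to be open; hence $S$ is closed. (Here I am using that $\rho$ is the essential supremum of $\rk G^{\Lambda}(\omega)$, as recorded right after Proposition \ref{lem:min}, so that $\rk G^{\Lambda}(\omega)\le \rho$ everywhere the Gramian is finite, and $S$ records precisely where strict inequality holds.)

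The substance is showing $S\neq\emptyset$, and this is where the divisibility hypothesis $[\Gamma:\Lambda]\nmid\rho$ enters. The idea is to argue by contradiction: suppose $\rk G^{\Lambda}(\omega)=\rho$ for \emph{every} $\omega\in\R^d$. First I would promote this to a statement about $A$. By Theorem \ref{thm:frame} (or directly via the $\Gamma$-invariance together with Theorem \ref{thm:rank}), the rank identity
\begin{equation*}
\rk G^{\Lambda}(\omega)=\sum_{f\in F}\rk A(\omega+f)
\end{equation*}
holds everywhere, and by Lemma \ref{lem:semicont} each $\omega\mapsto\rk A(\omega)$ is lower semi-continuous as well. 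If $\rk G^{\Lambda}$ is constantly equal to $\rho$, then each summand $\rk A(\omega+f)$ must itself be constant in $\omega$: indeed the sum of finitely many lower semi-continuous integer-valued functions can only be constant if each term is constant, since at any point where one term jumped up, lower semi-continuity of the others would push the sum strictly above $\rho$ on a neighborhood, contradicting constancy. So $\rk A\equiv m$ for some fixed integer $m$, and the identity reads $\rho=|F|\cdot m=[\Gamma:\Lambda]\cdot m$.

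This is the desired contradiction: it exhibits $[\Gamma:\Lambda]$ as a divisor of $\rho$, contrary to hypothesis. Hence the assumption that $\rk G^{\Lambda}\equiv\rho$ is untenable, so there exists some $\omega_0$ with $\rk G^{\Lambda}(\omega_0)<\rho$, i.e. $S\neq\emptyset$. The main obstacle I anticipate is the step asserting that each $\rk A(\omega+f)$ is individually constant: one must be careful that lower semi-continuity (rather than continuity) of the summands genuinely suffices. The key point is that if $g_1,\dots,g_N$ are lower semi-continuous integer-valued functions whose sum is the constant $\rho$, then at any point $\omega$ and any $f$, lower semi-continuity gives $g_f\ge g_f(\omega)$ near $\omega$ for every $f$ simultaneously, so their sum is $\ge\rho$ with equality only if no term strictly exceeds its value at $\omega$; pairing this with constancy of the sum pins every $g_f$ to a locally constant, hence (by connectedness of $\R^d$) globally constant, value. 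Once this is in hand the counting argument closes immediately.
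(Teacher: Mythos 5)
Your overall strategy is the same as the paper's: closedness from lower semi-continuity (Lemma \ref{lem:semicont}), then a contradiction argument in which the rank identity $\rk G^{\Lambda}(\omega)=\sum_{f\in F}\rk A(\omega+f)$ together with lower semi-continuity forces $\rk A$ to be globally constant, yielding $\rho=m\,[\Gamma:\Lambda]$. Your lemma that a constant sum of finitely many integer-valued lower semi-continuous functions has constant summands is exactly the mechanism the paper uses, and your justification of it is correct.

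There is, however, one elided step that is not cosmetic: you assert that the rank identity ``holds everywhere,'' but Theorem \ref{thm:rank} only gives it for a.e.\ $\omega$, and your summand-constancy argument genuinely needs it at \emph{every} point (if $\sum_{f}\rk A(\omega_0+f)$ could drop below $\rho$ on a null set, the summands could jump up near such an $\omega_0$ without violating anything you have established). The paper closes this gap with two observations you omit: first, subadditivity of rank applied to $G^{\Lambda}(\omega)=\sum_{f\in F}A(\omega+f)$ gives the pointwise inequality $\rho=\rk G^{\Lambda}(\omega)\leq\sum_{f\in F}\rk A(\omega+f)$ for \emph{all} $\omega$ under the contradiction hypothesis; second, the set where this inequality is strict is open (by lower semi-continuity of the sum) and null (by the a.e.\ identity), hence empty. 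With those two lines inserted, your proof is complete. A minor additional point: citing Theorem \ref{thm:frame} here is not legitimate, since Proposition \ref{prop:det} carries no frame hypothesis; only the route through Theorem \ref{thm:rank} is available.
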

\begin{proof}
The fact that $\{\omega, \; \rk G^{\Lambda}(\omega)\leq \rho-1\}$ is closed results from Lemma \ref{lem:semicont}. We therefore only have to prove that it is non-empty. Let us assume on the contrary that $G^{\Lambda}(\omega)$ has rank $\rho$ for all $\omega$.  
We know by Theorem \ref{thm:rank} that for a.e. $\omega\in \R^d$, 
\begin{equation}\label{eq:r}
\rho=\sum_{f\in F} \rk A(\omega+f).
\end{equation}
Moreover the equality is an inequality $\leq$ for all $\omega$. 
But again Lemma \ref{lem:semicont} implies that the $\omega$'s for which the inequality is strict form an open set, which therefore has to be empty.

Now observe that lower semi-continuous functions with integer values can only increase locally. In other words for every $\omega_0$ the set $$\{\omega, \; \rk A(\omega)\geq \rk A(\omega_0)\}=\{\omega, \; \rk A(\omega) > \rk A(\omega_0)-1/2\}$$ is a non-empty open set.  Together with (\ref{eq:r}), this immediatly implies that $\rk A(\omega)$ is locally constant, hence constant on $\R^d$ (since $\R^d$ is connected!). Let us call $m$ the corresponding integer. We therefore have $\rho=m[\Gamma:\Lambda]$, contradiction.
\end{proof}

\section{Proofs of The main results}\label{section:mainresults}

\subsection{Proof of  Theorem \ref{notranslation}}
Immediately follows from Corollary \ref{cor:frame}. 

\subsection{Proof of  Theorem \ref{pointwise}}
Suppose by contradiction that there exists $\epsilon>0$ such that $\omega^{\frac d2+\epsilon} \widehat\phi_{i}(\omega)\in L^\infty(\R^d)$ for all $i$. This easily implies that $A(\omega)$ is continuous, so we conclude by Corollary \ref{cor:frame'}: namely since $\rk A(\omega)$ is constant it must divide the rank of $G(\omega)$, which by (\ref{eq:rank})  would imply that the index of $\Lambda$ in $\Gamma$ divides $r$.

\subsection{Proof of Theorem \ref{epsilonthm}}
The proof of Theorem \ref{epsilonthm} relies on the following result of harmonic analysis. Its proof for $d=1$ is essentially the proof of
\cite[Theorem1.2]{ASW11}. Since it extends without change to any $d$, we do not reproduce it here.  
\begin{lemma}\label{epsilonlemma}
Let $\Lambda$ be a lattice in $\R^d.$
Suppose that a function $f\in L^2(\R^d)$ satisfies the following properties
\begin{enumerate}
\item there exists $\epsilon>0$ such that
$$\int_{\R^d} |f(x)|^2 |x|^{d+\epsilon} dx<\infty.$$

\item There exists a constant $C$ such that for a.e. $\omega\in \R^d$,
\begin{equation}\label{upperbond}
\sum_{k\in\Lambda^*}|\widehat f(\omega+k)|^2\le C.
\end{equation}
\item There exists $\omega_0$ is such that 
$$\widehat{f}(\omega_0+k)=0 \quad {\rm for \ all}\  k\in\Lambda^*,$$
\end{enumerate}
Then for all $\eta>0$, there exists $\delta_0>0$ only depending on $\eta,\epsilon$ and $C$ such that for all $0<\delta\leq \delta_0$
$$\frac{1}{\delta^d}\int_{B(\omega_0,\delta)} \sum_{k\in\Lambda^*}|\widehat{f}(\omega+k)|^2d\omega<\eta.$$
\end{lemma}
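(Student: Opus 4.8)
The plan is to pass to the Fourier side, write $g=\widehat f$, set $\Phi(\omega)=\sum_{k\in\Lambda^*}|g(\omega+k)|^2$, and exploit the vanishing hypothesis (3) through a modulation trick. First I would record that (1) already forces good global behaviour of $f$: writing $Q$ for a fundamental domain of the lattice $\Lambda$ and
$$\|f\|_{W}:=\sum_{n\in\Lambda}\norm{f\,\mathbf 1_{n+Q}}_{L^2(\R^d)},$$
a Cauchy--Schwarz estimate against the weight $|x|^{d+\epsilon}$ (using $\sum_{n\in\Lambda}(1+|n|)^{-(d+\epsilon)}<\infty$, valid since $d+\epsilon>d$) gives $\|f\|_{W}\le c_{d,\epsilon}\big(\int_{\R^d}|f(x)|^2|x|^{d+\epsilon}\,dx\big)^{1/2}<\infty$. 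In particular $f\in L^1$, so $g=\widehat f$ is continuous and the pointwise statement (3) and $\Phi$ itself make sense. Since $B(\omega_0,\delta)=\omega_0+B(0,\delta)$, the quantity to estimate is simply $\delta^{-d}\int_{B(0,\delta)}\Phi(\omega_0+h)\,dh$.

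The key step is the following identity. For fixed $h$ put $F_h(x):=f(x)\big(e^{-2\pi i\langle x,h\rangle}-1\big)$; then $\widehat{F_h}(\xi)=g(\xi+h)-g(\xi)$, so by (3) we have $\widehat{F_h}(\omega_0+k)=g(\omega_0+k+h)$ for every $k\in\Lambda^*$. Hence
$$\Phi(\omega_0+h)=\sum_{k\in\Lambda^*}|\widehat{F_h}(\omega_0+k)|^2\ \le\ \esssup_{\xi}\sum_{k\in\Lambda^*}|\widehat{F_h}(\xi+k)|^2.$$
The right-hand side is the square of the Bessel bound of $F_h$, which I would control by the amalgam norm through the standard estimate $\esssup_\xi\sum_{k}|\widehat u(\xi+k)|^2\le C_\Lambda\|u\|_{W}^2$ (expand the $\Lambda^*$-periodic function $\xi\mapsto\sum_k|\widehat u(\xi+k)|^2$ in Fourier series over $\R^d/\Lambda^*$ and sum the autocorrelations $\langle u,T_\lambda u\rangle$, $\lambda\in\Lambda$). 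This yields $\Phi(\omega_0+h)\le C_\Lambda\|F_h\|_{W}^2$, and therefore
$$\frac{1}{\delta^d}\int_{B(\omega_0,\delta)}\Phi(\omega)\,d\omega\ \le\ v_d\,C_\Lambda\,\sup_{|h|\le\delta}\|F_h\|_{W}^2 ,\qquad v_d=|B(0,1)|.$$

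It then remains to show $\|F_h\|_{W}\to0$ as $h\to0$. Using $|e^{-2\pi i\langle x,h\rangle}-1|\le\min\{2,\,2\pi|x|\,|h|\}$ and $|x|\le c(1+|n|)$ on $n+Q$, one gets $\|F_h\|_{W}\le\sum_{n\in\Lambda}\norm{f\,\mathbf 1_{n+Q}}_{L^2}\,\min\{2,\,2\pi c(1+|n|)|h|\}$; each summand tends to $0$ as $h\to0$ and is dominated by the summable sequence $2\norm{f\,\mathbf 1_{n+Q}}_{L^2}$, so dominated convergence gives $\sup_{|h|\le\delta}\|F_h\|_W\to0$ as $\delta\to0$, producing the desired threshold $\delta_0$.

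The step I would treat most carefully is the \emph{quantitative} form of this last limit, since it is exactly what $\delta_0$ depends on. The speed at which $\|F_h\|_W\to0$ is governed by the tail of $\sum_n\norm{f\,\mathbf 1_{n+Q}}_{L^2}$, that is, by the finite weighted norm $\int|f|^2|x|^{d+\epsilon}\,dx$ of (1), and not by $C$ and $\epsilon$ alone. Indeed, a smooth bump of height $\sqrt C$ and width $w$ placed at distance $\sim w$ from $\omega_0$ satisfies (1)--(3) with the same $C,\epsilon$ while forcing $\delta^{-d}\int_{B(\omega_0,\delta)}\Phi\approx C$ at scale $\delta\approx w$; letting $w\to0$ shows no $\delta_0$ can depend on $\eta,\epsilon,C$ only. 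I would therefore state the conclusion with $\delta_0$ depending on $\eta$, $\epsilon$ and the value of the integral in (1) (equivalently on $\|f\|_W$), the role of (2) being merely to record, consistently with (1), that the Bessel bound of $f$ is finite and bounded in terms of $\|f\|_W$.
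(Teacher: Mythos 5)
Your argument is sound, and since the paper does not actually reproduce a proof of this lemma (it defers to \cite{ASW11}), it can only be judged on its own merits: the modulation trick $F_h=f\,(e^{-2\pi i\langle\cdot,h\rangle}-1)$, which converts hypothesis (3) into the identity $\sum_k|\widehat f(\omega_0+h+k)|^2=\sum_k|\widehat{F_h}(\omega_0+k)|^2$, combined with the standard amalgam bound $\sup_\xi\sum_k|\widehat u(\xi+k)|^2\lesssim\norm{u}_W^2$ (via the absolutely convergent Fourier series of the periodization, with coefficients the autocorrelations $\langle u,T_\lambda u\rangle$, $\lambda\in\Lambda$) and dominated convergence for $\norm{F_h}_W\to0$, is a correct and clean route to the conclusion. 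Two small repairs: the inequality $\norm{f}_W\le c_{d,\epsilon}\bigl(\int|f|^2|x|^{d+\epsilon}dx\bigr)^{1/2}$ fails for $f$ concentrated near the origin, where the weight vanishes; you need $\norm{f}_W\le c\bigl(\norm{f}_{L^2}+(\int|f|^2|x|^{d+\epsilon}dx)^{1/2}\bigr)$, which is all you use. And when you dominate the pointwise value $\sum_k|\widehat{F_h}(\omega_0+k)|^2$ by an essential supremum, you should record that this sum is lower semicontinuous in $\xi$ (a supremum of continuous partial sums), so it is indeed bounded at every point by the essential supremum over a neighborhood.

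Your closing observation is the substantive one, and it is correct: as literally stated, with $\delta_0$ depending only on $\eta,\epsilon,C$, the lemma is false. Your family of smooth bumps of height $\sqrt C$ and width $w$ placed at distance $\sim w$ from $\omega_0$ satisfies (1)--(3) with fixed $\epsilon$ and $C$, yet forces $\delta^{-d}\int_{B(\omega_0,\delta)}\sum_k|\widehat f(\omega+k)|^2d\omega\gtrsim C$ at scale $\delta\sim w$; the point is that the integral in (1) blows up like $Cw^{-\epsilon}$ as $w\to0$, so it cannot be omitted from the list of parameters. The correct statement lets $\delta_0$ depend on $\eta$, $\epsilon$ and the value of the integral in (1) (equivalently on $\norm{f}_W$), with (2) then redundant. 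Fortunately this is exactly the uniformity that the application in the proof of Theorem \ref{epsilonthm} requires: there the lemma is applied to $f_a=\sum_ia_i\phi_i$ for $a$ in the unit sphere of $V_0$, and $\int|f_a|^2|x|^{d+\epsilon}dx$ is bounded uniformly in $a$ by $r\max_i\int|\phi_i|^2|x|^{d+\epsilon}dx$; the phrase ``with $C$ independent of $a$'' in that proof should be read as ``with $C$ and the weighted integral bounded independently of $a$''. So your proof establishes the version of the lemma that the paper actually needs.
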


\begin{proof}[Proof of Theorem \ref{epsilonthm}] We shall use the notation introduced in Section \ref{Section:Frame} (before stating Lemma \ref{lem:matrices}). 
Let us suppose by contraction that there exits $\epsilon>0$ such that 
\begin{equation}\label{epsilonequ}
\int_{\mathbb R^d} |\phi_{i}(x)|^2 |x|^{d+\epsilon} dx<\infty,
\end{equation}
for every $i$. Note that this condition implies that  the $\phi_i$ are in $L^1$, so that their Fourier transforms are uniformly continuous. 

Define $E=\{\omega,\;\rk G^{\Lambda}(\omega)<\rho\}$, which by Proposition \ref{prop:det}, is a non-empty closed subset of $\R^d$. Take an open ball $B\subset E^c$ whose boundary interests $E$ in a point $\omega_0$. Observe that at least one fourth of the volume of a  sufficiently small ball centered in $\omega_0$  is contained in $E^c$. We will use this remark at the end of the proof.

Since $\omega_0\in E$, there exists an $(r-\rho+1)$-dimensional subspace $V_0$ of $\R^r$ on which $G^{\Lambda}(\omega_0)$ vanishes. Since $G^{\Lambda}(\omega)=F(\omega)F^*(\omega)$ where $F(\omega)$ is the $r\times |\Lambda^*|$ matrix whose $(i,l)$-coefficient is  $F(\omega)_{il}=\widehat{\phi}_i(\omega+l)$, we have $F^*(\omega_0)a=0$ for all $a\in V_0$. This implies that 
$$\sum_{i=1}^r a_i\widehat{\phi}_i(\omega_0+l)=0 \quad {\rm for \ all}\ l\in\Lambda^*.$$
In other words, the function $f_{a}(x)=\sum_{i=1}^ra_i\phi_i(x)$ satisfies $\widehat{f_{a}}(\omega_0+l)=0$ for all $l\in \Lambda^*$ and for all $a\in V_0$. 
Now clearly the functions $f_a$ for $a\in S_{V_0}$ satisfy the assumptions of the lemma with $C$ independent of $a.$ We therefore get for $\delta>0$ small, 
\begin{equation}\label{eq}
\frac{1}{\delta^d}\int_{B(\omega_0,\delta)} \sum_{k\in\widehat{\Lambda}}|\widehat{f_a}(\omega+k)|^2d\omega<\eta.
\end{equation}

Suppose $a_1, \dots, a_{r-\rho+1}$ is an orthonormal basis of $V_0$. Then for a.e. $\omega\in B$, the frame condition implies 
$$s^{-1}\leq \mu^{-}(G^{\Lambda}(\omega))\leq \sum_{i=1}^{r-\rho+1}\langle G^{\Lambda}(\omega)a_i,a_i\rangle.$$    
By the above remark, we see that this contradicts (\ref{eq}) provided $\eta$ is small enough.
\end{proof}

\section{Proofs of Proposition \ref{prop:regular} and Proposition \ref{prop:pointwise}}\label{section:mainpropositions} 
\subsection{Proof of Proposition \ref{prop:regular}}
It is actually enough to provide a construction for $d=1$. Indeed given such $\phi_1,\ldots,\phi_{2k}$ in $L^2(\R)$, we define generators in dimension $d$ by considering tensor products of these functions: namely $\tilde{\phi}_{i_1,\ldots,i_d}=\phi_{i_1}\otimes \ldots\otimes\phi_{i_d}.$ Then we get that $V_{\tilde{\Phi}}$ is simply the tensor product $V_{\Phi}^{\otimes d}$, whose generators are clearly orthonormal. Translation invariance of $V_{\tilde{\Phi}}$ therefore follows from that of $V_{\Phi}$.

Hence, let us focus on the one-dimensional case.
Let $g$ be an infinitely-differentiable function  that satisfies $g(x)=0$ when $x\le 0$, $g(x)=1$ when $x\ge 1$,
 and $g^2(x)+g^2(1-x)=1$ when $0\le x\le 1$.  Define $\widehat{\phi}_1(\omega)=g(\omega)g(2-\omega)$ and $\widehat{\phi}_2(\omega)=g(1-\omega)\chi_{[0,1)}-g(\omega-1)\chi_{[1,2)}$.

%Without of loss of generality, suppose here $r$ is an even number. 
Then for $i\le r$, define $\widehat{\phi}_i(\omega)=\widehat{\phi}_1(\omega-i+1)$ when $i$ is odd and $\widehat{\phi}_i(\omega)=\widehat{\phi}_2(\omega-i+2)$ when $i$ is even ($\widehat{\phi}_1$ and $\widehat{\phi}_2$ are plotted in Figure 1). It is easy to check that $\sum_{l\in \Z}|\widehat{\phi}_i(\omega+l)|^2=1$ a.e. $\omega$ for all $i$, and that $\sum_{l\in \Z}\widehat{\phi}_1(\omega+l)\overline{\widehat{\phi}_2}(\omega+l)=0$. We deduce that $\sum_{l\in \Z}\widehat{\phi}_i(\omega+l)\overline{\widehat{\phi}_j}(\omega+l)=0$ for $i\neq j$. This shows that $G(\omega)= I$ which amounts to saying that the functions $\phi_i$ are orthonormal generators for $V(\Phi)$. 

 \begin{figure}[hbt]
\centering
\begin{tabular}{c}
  \includegraphics[width=80mm]{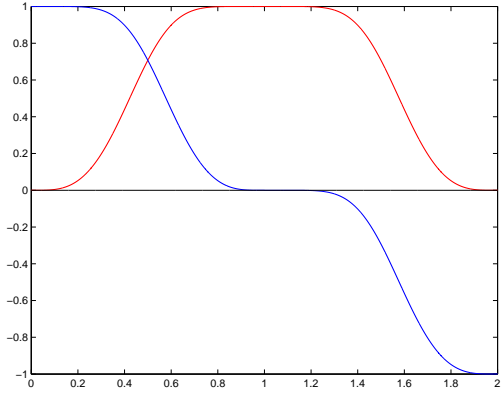} 
 
\end{tabular}
\caption{The function $\widehat{\phi}_1$ being red and the function $\widehat{\phi}_2$ being blue.
}
 \label{figure1}
\end{figure}

Since all the functions are supported on $[0,r]$, we can show that, for a.e. $\omega\in[0,1)$, $\mathrm{rank\ }A(\omega+f)=1$ when $0\le f\le r-1$ and $\mathrm{rank\ }A(\omega+f)=0$ elsewhere. Hence $\sum_{f\in \Z}\mathrm{rank\ }A(\omega+f)=r=\mathrm{rank\ } G^{\Lambda}(\omega)$. From Corollary \ref{cor:rank}, $V(\Phi)$ is translation-invariant.  

Observe that when $i$ is odd, $\widehat{\phi}_i$ is compactly supported and infinitely-differentiable. When $i$ is even, $\widehat{\phi}_i$ can be written as a product of a compactly supported and infinitely-differentiable function and the characteristic function $\chi_{[i-2,i)}$. Since $\chi_{[i-2,i)}$ belongs to $H^{\frac12-\epsilon}$, we have $\widehat{\phi}_i\in H^{\frac12-\epsilon}$ for any $\epsilon>0$.

\subsection{Proof of Proposition \ref{prop:pointwise}}
We assume that $\Gamma=\frac{1}{n_1}\Z\times\frac{1}{n_2}\Z\times\cdots\times\frac{1}{n_d}\Z$. 
We first need the following lemma which treats the case when $d=1$ and $r=1$.
\begin{lemma}[\cite{ASW11}]\label{dimension1}
For integer $n_1$, there exists a function $\phi_1 \in L^1\cap L^2$  (and hence $\widehat \phi_1$ is continuous),  such that $\phi_1$ is an orthonormal generator for its generating space $V(\phi_1)$,
$V(\phi_1)$ is $\frac{1}{n_1}\Z$-invariant and $\omega^{\frac 12} \widehat\phi_1(\omega)\in L^\infty(\R)$. %Also, $\phi_{1_1}$ can be chosen to be an odd function such that $|\widehat\phi_{1_1}|\le1$ and has support on %$[-\frac14,\frac14]\cup E_1\cup E_1^\prime$, where $E_1=\cup_{l\ge1}[\frac12-\frac1{2^l}+n_1l, \frac12-\frac1{2^%{l+2}}+n_1l]$ and $E_1^\prime=\cup_{l\ge1}[\frac1{2^l}-\frac12-n_1l, \frac1{2^{l+2}}-\frac12-n_1l]$.
\end{lemma}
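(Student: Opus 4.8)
The plan is to reduce the three requirements to explicit conditions on $\widehat{\phi}_1$ and then exhibit a concrete construction. The case $n_1=1$ is immediate, since $\frac11\Z$-invariance is the same as $\Z$-invariance, which holds for any $V(\phi_1)$; here a Meyer-type orthonormal generator with $\widehat{\phi}_1$ smooth and compactly supported does the job (then $\phi_1\in L^1\cap L^2$ and $\omega^{1/2}\widehat{\phi}_1\in L^\infty$ trivially). So I assume $n:=n_1\ge 2$. Writing $\Lambda=\Z$, $\Gamma=\frac1n\Z$, so $\Lambda^*=\Z$, $\Gamma^*=n\Z$ and $F=\{0,\dots,n-1\}$, the single-generator ($r=1$) Gramian is the scalar $G^{\Lambda}(\omega)=\sum_{l\in\Z}|\widehat{\phi}_1(\omega+l)|^2$ and $A(\omega+f)=\sum_{g\in n\Z}|\widehat{\phi}_1(\omega+f+g)|^2$. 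Thus orthonormality of $\{\phi_1(\cdot-k)\}$ is equivalent to $G^{\Lambda}\equiv 1$, while Corollary~\ref{cor:rank} (a special case of Theorem~\ref{thm:rank}) shows that $\frac1n\Z$-invariance is equivalent to $1=\sum_{f=0}^{n-1}\rk A(\omega+f)$ for a.e. $\omega$; that is, for a.e. $\omega\in[0,1)$ the support of $\widehat{\phi}_1$ meets the orbit $\omega+\Z$ in exactly one residue class modulo $n$.

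Before constructing $\widehat{\phi}_1$, I would record the obstruction that dictates its shape. Folding $|\widehat{\phi}_1|^2$ modulo $\Z$, write $a_k(t)=\widehat{\phi}_1(t+k)$ for $t\in[0,1)$; continuity of $\widehat{\phi}_1$ forces the chaining $a_{k}(1^-)=a_{k+1}(0^+)$, while the invariance condition says that at each fiber $t$ only the indices $k$ in one residue class carry mass. If the series $\sum_{k}a_k(t)^2$ converged \emph{uniformly} in $t$, then $g_r(t):=\sum_{k\equiv r}a_k(t)^2$ would be continuous; being $\{0,1\}$-valued (by the invariance condition) and summing to $G^{\Lambda}\equiv1$, it would be constant on the connected interval $(0,1)$, and the chaining relation would then force $g_r$ to vanish at the endpoints, a contradiction. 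This shows that no construction with uniformly decaying tails can exist, and motivates a design in which mass is deliberately transported to arbitrarily high frequencies, where uniform convergence fails. It is consistent with the borderline decay $|\widehat{\phi}_1(\omega)|\asymp|\omega|^{-1/2}$, which Theorem~\ref{pointwise} shows cannot be improved.

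With this in mind I would build $\widehat{\phi}_1$ as a smoothed \emph{staircase}. Partition $[0,1)$ into $n$ sub-blocks $I_r=[r/n,(r+1)/n)$ on which the active residue class is to be $r$, and on each $I_r$ distribute the unit mass among the frequencies $t+k$ with $k\equiv r\pmod n$ using a smooth square partition of unity $\sum_j u_j^2\equiv1$ whose $j$-th bump sits near a frequency of order $jn$ with height of order $(jn)^{-1/2}$. At each transition $t=(r{+}1)/n$ the mass is routed from the residue-$r$ frequencies out to a high-frequency spike and back down onto the residue-$(r{+}1)$ frequencies; this is precisely where $g_r$ is allowed to jump, while the chaining $a_k(1^-)=a_{k+1}(0^+)$ is honoured by matching bump values at integer frequencies. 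I would take the individual bumps $C^\infty$ with summable widths so that $\widehat{\phi}_1\in C^2$ with $\widehat{\phi}_1,\widehat{\phi}_1',\widehat{\phi}_1''\in L^1$, hence $\phi_1\in L^1$, and with heights of order $|\omega|^{-1/2}$, so that $\omega^{1/2}\widehat{\phi}_1\in L^\infty$ while $\int|\widehat{\phi}_1|^2=\int_0^1 G^{\Lambda}=1<\infty$ gives $\phi_1\in L^2$.

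It then remains to verify, for this explicit $\widehat{\phi}_1$: (i) $G^{\Lambda}\equiv1$ (orthonormal generator); (ii) that exactly one residue class is hit on a.e. fiber, so that $\frac1n\Z$-invariance follows from Corollary~\ref{cor:rank}; (iii) continuity and the $L^1$ estimate; and (iv) the pointwise bound $\omega^{1/2}\widehat{\phi}_1\in L^\infty$. The main obstacle is the design of the transition regions: one must switch the active residue class while simultaneously keeping $\widehat{\phi}_1$ continuous and preserving $\sum_{l}|\widehat{\phi}_1(\cdot+l)|^2\equiv1$ \emph{exactly}. This is only possible because mass is allowed to escape to infinity at each transition, a feature both forced and afforded by the borderline $|\omega|^{-1/2}$ decay; arranging the bumps to match at integer frequencies and the squared partition of unity to close up to $1$ through every transition is the delicate bookkeeping at the heart of the argument.
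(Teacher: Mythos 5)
The paper does not reprove this lemma: it cites \cite{ASW11} and merely records the explicit generator (formula (5.1)), in which level $j$ consists of $4^j$ translated bumps of height $2^{-j}$ placed at the multiples $n_1(\gamma_j+l)$, so that all mass on every fiber sits in the single residue class $0$ mod $n_1$ and escapes to infinity as the fiber approaches the seam at $\pm\frac12$. Your reduction of the three requirements (orthonormality $\Leftrightarrow G^{\Lambda}\equiv1$; $\frac1n\Z$-invariance $\Leftrightarrow$ exactly one active residue class per fiber; borderline $|\omega|^{-1/2}$ heights) is correct in substance and matches the spirit of that construction, though the invariance criterion you want is Theorem \ref{thm:rank} with $\Gamma=\frac1n\Z$, not Corollary \ref{cor:rank} (which is the translation-invariant case), and your staircase of residue classes over the sub-blocks $I_r$ is an unnecessary complication --- a single fixed residue class works and is what \cite{ASW11} uses.

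There are, however, two genuine gaps. First, the construction is never actually carried out: the transition regions, where the active frequencies must be routed to infinity while keeping $\widehat{\phi}_1$ continuous and $\sum_l|\widehat{\phi}_1(\cdot+l)|^2\equiv1$ exact, are exactly what you yourself call ``the delicate bookkeeping at the heart of the argument,'' and they constitute the entire content of the lemma; a proof that defers them is a plan, not a proof. Second, and more seriously, your route to $\phi_1\in L^1$ --- choosing the bumps so that $\widehat{\phi}_1\in C^2$ with $\widehat{\phi}_1,\widehat{\phi}_1',\widehat{\phi}_1''\in L^1$ --- cannot succeed for \emph{any} construction of this type. Each bump of height $h$ contributes at least $2h$ to $\int|\widehat{\phi}_1'|$ (and at least as much to $\int|\widehat{\phi}_1''|$ once the widths are $\le1$), and since the squared heights, which are $O(|\omega|^{-1})$, must sum to $1$ on fibers arbitrarily close to the seam, the sum of all bump heights diverges (in your parametrization $\sum_j(jn)^{-1/2}=\infty$; in the paper's, $\sum_j 4^j\cdot2^{-j}=\infty$). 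Hence $\widehat{\phi}_1$ has infinite total variation and neither $\widehat{\phi}_1'$ nor $\widehat{\phi}_1''$ is integrable. The $L^1$ membership of $\phi_1$ in \cite{ASW11} is obtained not termwise from smoothness but by grouping the $4^j$ translates at each level into a dilated bump multiplied (on the space side) by a Dirichlet-type kernel and exploiting the resulting cancellation; this estimate is the nontrivial analytic step your sketch is missing.
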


Since our method relies on the construction of $\phi_1$ in Lemma \ref{dimension1}, we will describe $\phi_1$ explicitely here. Let $g$ be the function as in the proof of Proposition  \ref{prop:regular}. Define $ g_0(x)= g(x+1) g(-x+1)$ and $g_1(x)=g(x+1)g(-2x+1)$. Then the Fourier transform of $\phi_{1_1}$ is defined to be 

\begin{eqnarray}\label{time.lem.eq1}
\widehat {\phi_1}(\omega) & = &  h_0(\omega)+\sum_{j=1}^\infty \sum_{l=0}^{4^j-1}  2^{-j} h_j(\omega-n_1 (\gamma_j+l))\nonumber\\
& & \quad +\sum_{j=1}^{\infty} \sum_{l=0}^{4^j-1} 2^{-j}
h_j(-\omega-n_1 (\gamma_{j}+l)), 
\end{eqnarray}
where $\gamma_j=\sum_{k=0}^{j-1} 4^k$, $h_0(\omega)=g_0(4\omega)$ and $h_j(\omega)=g_1(2^{j+1}\omega-2^j+1)$. 

It is not hard to check that $\widehat {\phi_1}$ is an even function such that $|\widehat\phi_1|\le1$ and has support on $[-\frac14,\frac14]\cup( \cup_{j=1}^\infty E_{1j}\cup E_{1j}^\prime)$, where $E_{1j}=\cup_{l=0}^{4^j-1}[\frac12-\frac1{2^j}+n_1(\gamma_j+l), \frac12-\frac1{2^{j+2}}+n_1(\gamma_j+l)]$ and $E_{1j}^\prime=\cup_{l=0}^{4^j-1}[\frac1{2^j}-\frac12-n_1(\gamma_j+l), \frac1{2^{j+2}}-\frac12-n_1(\gamma_j+l)]$.

\noindent{\bf From one to several generators (in dimension 1).} Now we want to construct a $\frac{1}{n_1}\Z$-invariant SIS with $r$ orthonormal generators in $L^1\cap L^2$ satisfying our pointwise decay property in Fourier domain.

Define $\phi_{i}$ for $2\le i\le r$ to be such that $\widehat {\phi}_{i}(\omega)=0$ when $\omega\in[0,n_1\gamma_{2i-2}-1]$ and 
\begin{align}
\widehat {\phi}_{i}(\omega)= \frac{1}{\sqrt2}\Big(\sum_{l=0}^{4^{2i-2}-1}2^{-(2i-2)}h_0(\omega-n_1(\gamma_{2i-2}+l))\Big)\nonumber\\
+\sum_{j=1}^\infty \sum_{l=0}^{4^{j+2i-2}-1}  2^{-(j+2i-2)} h_j(\omega-n_1(\gamma_{j+2i-2}+l))
\end{align}

when $\omega\ge n_1\gamma_{2i-2}-1$, and $\widehat \phi_{i}(\omega)= \widehat \phi_{i}(-\omega)$ when $\omega\le0$. 

It is easy to see that each $\widehat\phi_{i}$ obeys the pointwise decay property and that it is an orthonormal generator for the principal SIS $V(\phi_{i})$. In fact, 
$$\sum_{l\in {\mathbb Z}}|\widehat \phi_{i}(\omega+l)|^2=\sum_{l\in {\mathbb Z}}|\widehat \phi(\omega+l)|^2=1$$ 
for a.e. $\omega$. One can also check that for $2\le i\le r$, $\widehat\phi_{i}$ has support 
\begin{align}
\Big( \cup_{j=1}^\infty E_{ij}\cup E_{ij}^\prime\Big)\bigcup\Big(\cup_{l=0}^{4^{2i-2}-1}[-\frac14-n_1(\gamma_{2i-2}+l),\frac14-n_1(\gamma_{2i-2}+l)]\Big)\nonumber\\
\bigcup\Big(\cup_{l=0}^{4^{2i-2}-1}[-\frac14+n_1(\gamma_{2i-2}+l),\frac14+n_1(\gamma_{2i-2}+l)]\Big),
\end{align}
where $E_{ij}=\cup_{l=0}^{4^{j+2i-2}-1}[\frac12-\frac1{2^j}+n_1(\gamma_{j+2i-2}+l), \frac12-\frac1{2^{j+2}}+n_1(\gamma_{j+2i-2}+l)]$ and $E_{ij}^\prime=\cup_{l=0}^{4^{j+2i-2}-1}[\frac1{2^j}-\frac12-n_1(\gamma_{j+2i-2}+l), \frac1{2^{j+2}}-\frac12-n_1(\gamma_{j+2i-2}+l)]$. 

In particular,  each $V(\phi_{i})$ is $\frac1{n_1}$-invariant and all the $\phi_{i}$'s have disjoint support in Fourier domain. Hence $V(\Phi_1)$ is also $\frac1{n_1}$-invariant, where $\Phi_1$ is the column vector whose components are the $\phi_{i}$'s. In fact
\begin{equation}\label{orthognal}
V(\Phi_1)=\bigoplus_{i\le r}V(\phi_{i})
\end{equation}

From Lemma \ref{dimension1}, for each $j\ge2$, we can construct $V(\psi_j)$ with orthonormal generator $\psi_j$ having the desired properties and that $V(\psi_j)$ is $\frac1{n_j}\Z$-invariant.  

\noindent{\bf Higher dimension.} 
Like in the previous section, we let  $\tilde{\phi}_{i}$ be the $d$-fold tensor product $\phi_{i}\otimes\psi_2 \ldots\otimes\psi_d$, for $i=1,\ldots, r$. By construction, the shift-invariant space generated by these $r$ orthonormal function is $\Gamma$-invariant. Since all $\phi_{i}$'s and $\psi_{j}$'s are in $L^1(\R)$, it follows that the $\tilde{\phi}_{i}$'s are in $L^1(\R^d)$. Pointwise decay results from the fact that all these functions have bounded Fourier transforms.

\bigskip

{\bf Acknowledgement} \quad The authors would like to thank Akram Aldroubi and Qiyu Sun for valuable discussions. The authors would also like to thank Carolina Mosquera and Victoria Paternostro for their comments on the proof of Theorem 1.2.

\end{document}